 %version de 03.04.2011

\NeedsTeXFormat{LaTeX2e}
\documentclass[reqno,10pt]{amsart}
\usepackage{amsfonts}
\usepackage[english]{babel}
\usepackage{amsthm,hyphenat}
\usepackage{amsmath}
\usepackage{amssymb}
\usepackage[latin1]{inputenc}
\usepackage[all]{xy}
\usepackage{dsfont}
\usepackage[bookmarksnumbered]{hyperref}
\usepackage{xcolor}
\usepackage{graphics}
\usepackage{enumerate}
\usepackage{mathrsfs} 
\usepackage[normalem]{ulem}
\def\bb#1\eb{{\color{blue}
{#1}}} %
\def\br#1\er{{\color{red}
{#1}}} %
\def\bv#1\ev{{\color{gray}
		{#1}}} %
		\def\bm#1\em{{\color{magenta}
		{#1}}} %
\hyphenation{Lo-ren-tzian}
\hyphenation{ad-mi-ssi-ble}
\hyphenation{ani-so-tro-pic}
%\flushbottom
%\topmargin 0pt
%\headheight 0pt
%\headsep 0pt
%%%%%%%%%%%%%%%%%%%%%%%%%%%%%%%%%%%%%
%para cambiar los margenes:
%\oddsidemargin 0pt \evensidemargin 0pt \textheight 9in \textwidth 6.5in
%%%%%%%%%%%%%%%%%%%%%%%%%%%%%%%%%%%%%%%%%%%%%
%\textwidth 16cm \textheight 20cm \topmargin 0cm \evensidemargin
%0.5cm \oddsidemargin 0.5cm
%\usepackage[twoside,dvips]{geometry}
%\geometry{a4paper, vmargin={2.2cm , 3.5cm}, hmargin={3.2cm ,3.2cm}}
%\userpackage[all]{xy}

\newcommand{\R}{\mathds R}

\newcommand{\h}{h}

\title[Anisotropic conformal invariance of lightlike geodesics]{Anisotropic conformal invariance of lightlike geodesics in  pseudo-Finsler manifolds}

\author[M. A. Javaloyes]{Miguel Angel Javaloyes}
\address{Departamento de Matem\'aticas, \hfill\break\indent
Universidad de Murcia, \hfill\break\indent
Campus de Espinardo,\hfill\break\indent
30100 Espinardo, Murcia, Spain}
\email{majava@um.es}

\author[B. L. Soares]{Bruno Learth Soares}
\address{}
\email{bsoares@ime.usp.br}

%\dedicatory{This article is dedicated to Professor Lajos Tamassy on the occasion of his 90th birthday}

\date{}%03.04.2011}
 \thanks{ This work is a result of the activity developed within the framework of the Programme in
 	Support of Excellence Groups of the Regi\'on de Murcia, Spain, by Fundaci\'on S\'eneca, Science and Technology Agency of the Regi\'on de Murcia. The first author was partially supported by MICINN/FEDER project with reference PGC2018-097046-B-I00  and Fundaci\'on S\'eneca project with reference 19901/GERM/15. }

\thanks{2000 {\it Mathematics Subject Classification:} Primary 53C22, 53C50, 53C60, 58B20\\
\textbf{Key words:} Finsler, Lightlike geodesics, Index form, focal points.}

\begin{document}
\newtheorem{thm}{Theorem}[section]
\newtheorem{prop}[thm]{Proposition}
\newtheorem{lemma}[thm]{Lemma}
\newtheorem{cor}[thm]{Corollary}
\theoremstyle{definition}
\newtheorem{defi}[thm]{Definition}
\newtheorem{notation}[thm]{Notation}
\newtheorem{exe}[thm]{Example}
\newtheorem{conj}[thm]{Conjecture}
\newtheorem{prob}[thm]{Problem}
\newtheorem{rem}[thm]{Remark}

\begin{abstract}
In this paper, we prove that lightlike geodesics  of a pseudo-Finsler manifold  and its focal points are preserved up to reparametrization by  anisotropic conformal changes, using  the Chern connection and the anisotropic calculus  developed in \cite{J16,J19}  and the fact that geodesics are critical points of the energy functional and Jacobi fields, the kernel of its index form. This result has applications to the study of Finsler spacetimes. 
\end{abstract}

\maketitle
\section{Introduction}
\ Conformal Geometry has been studied in Riemannian Geometry from its very beginning, since it is related with the angle-preserving maps and it has applications in navigation chart making and many other things.  In the Lorentzian context, the relevance of conformal geometry increases as it preserves causality and it has been extensively used in studying the boundary of a spacetime \cite{Frau04}. In the realm of Finsler metrics, one can consider a much more general concept of conformal geometry making the conformal factor dependent on the direction, namely, a function on the tangent bundle, which we call {\it anisotropic conformal geometry}. \ The concept of anisotropic conformal geometry seems to be too general in the context of Finsler Geometry, since every two Finsler metrics are anisotropically conformal. But surprisingly, it gains relevance when one considers  the more general class of  pseudo-Finsler metrics. It turns out that two pseudo-Finsler metrics are anisotropically conformally equivalent if and only if they have the same lightcone (see 
Def.~\ref{def:anisot} and Th. \ref{t_anisotropic}).  
The first natural question to study is whether it happens as in the classical Lorentzian case, where lightlike pregeodesics are preserved by conformal changes (see for example \cite[Th. 2.36]{MinSan08}). The answer is positive (Prop. \ref{georepara}), which is proved by using that geodesics are critical points of the energy functional, and computing the Euler-Lagrange equations corresponding to the first variation of the energy of an anisotropic conformal pseudo-Finsler manifold $(M,\lambda L)$, in terms of the Chern connection of the pseudo-Finsler manifold $(M,L)$. 

 As a further result,  in the last section, we also prove that $P$-focal points are preserved with multiplicity by anisotropic conformal changes, Th. \ref{finalresult}. This is done using that $P$-Jacobi fields are the kernel of the second variation of the energy functional and making all the computations with the Chern connection of $(M,L)$. The essence of the proof lies in the result of  Lemma  \ref{PQJacobi},
where we prove that given a $(P,Q)$-Jacobi field $\tilde J$ of a geodesic $\tilde \gamma$  of $(M,L)$ with $P$ and $Q$ submanifolds of $M$ orthogonal to $\tilde\gamma$ at its endpoints, one can construct a $(P,Q)$-Jacobi field of its reparametrization $\gamma$ as a geodesic of $(M, \lambda L)$ by reparametrizing $\tilde J$ and adding a multiple of the velocity vector $\dot\gamma$. With this procedure, we avoid at all moment to work with the Chern connection of $(M,\lambda L)$. All the computations are made using the Chern connection and the anisotropic calculus described in \cite{J16,J19}, which makes them more available to classical Riemannian Geometers. 

 One of the most fashionable applications of our results is the study of light rays in Finsler spacetimes. Let us observe that there are some examples of Finsler spacetimes with a $2$-homogeneous Finsler metric which is not smooth on the lightcone as for example Bogoslovsky spacetimes \cite{Bogos77} (see also \cite{FPP18} and \cite[\S 6.1.1]{BJS20}), Randers spacetimes \cite{Sta12}, Kostelecky models \cite{Kos11}, or those given by bimetrics (see \cite{PW11}). In some cases, the lightcone is still a cone structure in the sense of \cite{JS18} which admits a smooth Lorentz-Finsler metric. The results on this work claim that lightlike geodesics are well-defined, namely, do not depend on the choice of Lorentz-Finsler metric for the cone structure and they  coincide with the cone geodesics of \cite[\S 6]{JS18}. But more importantly, the focal points, which are of great importance in General Relativity (see for example the Finsler version of Penrose's Singularity Theorem \cite{AJ16})
are also preserved. Moreover, there are some difficulties to obtain the Einstein's equations on the lightcone \cite{HPV19}. Our results show that lightlike geodesics and its focalization can be controled by knowing only the boundary of timelike vectors. The only thing that it is not possible to control is the parametrization of lightlike geodesics, which is important for example in the Singularity Theorems.  

\section{Preliminaries on pseudo-Finsler metrics} 
Let $M$ be an  $n$-dimensional manifold and denote by $\pi:TM\rightarrow M$ the natural projection
of the tangent bundle $TM$ into $M$. Let $A\subset TM\setminus  {\bf 0}$ be an open subset of $TM$ which is conic, that is, such that $\pi(A)=M$ and $\lambda v\in A$, for every $v\in A$ and $\lambda>0$. We say that a smooth function
$L:A\rightarrow \R$ is a (conic, two homogeneous) {\it pseudo-Finsler metric} if
\begin{enumerate}[(i)]
\item $L$ is positive-homogeneous of degree $2$, that is, $L(\lambda v)=\lambda^2 L(v)$ for every $v\in A$ and $\lambda>0$,
\item for every $v\in A$, {\it the fundamental tensor $g_v$ of $L$ at 
$v$} defined by
\[g_v(u,w):=\frac 12 \frac{\partial^2}{\partial t\partial s} L(v+tu+sw)|_{t=s=0},\]
%the matrix 
%\[g_{ij}(x,v)=\left[\frac 12 \frac{\partial^2 F}{\partial v^i\partial v^j}(x,v)\right]\]
for any $u,w\in T_{\pi(v)}M$, is nondegenerate.
\end{enumerate}
 Clearly,  the fundamental tensor is bilinear and symmetric. We will  refer to the pair $(M,L)$, being $M$ a manifold and $L$ a pseudo-Finsler metric on $M$, as a pseudo-Finsler manifold. 
 
 Basic properties of homogeneous functions imply  the following. 
\begin{prop}\label{fundamentalprop}
Given a pseudo-Finsler metric $L$ and $v\in A$, the fundamental tensor 
$g_v$ is positive homogeneous of degree 0, that is, $g_{\lambda v}=g_v$ for $\lambda>0$. Moreover $g_v(v,v)=L(v)$ and
$ g_v(v,w)= \frac 12\frac{\partial}{\partial z}L\left(v+z w\right)|_{z=0}=\frac 12 dL_v(w)$.
\end{prop}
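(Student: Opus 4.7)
The plan is to deduce all three assertions from Euler's theorem on positively homogeneous functions, together with a rescaling argument. The only fact I would use repeatedly is that if $f:A\to\R$ is positively homogeneous of degree $k$, then $df_v(v)=k\,f(v)$, and moreover the directional derivative $v\mapsto df_v(w)$ is positively homogeneous of degree $k-1$ in its base point.

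For the degree-$0$ homogeneity $g_{\lambda v}=g_v$ ($\lambda>0$), I would work directly from the defining formula. The $2$-homogeneity of $L$ gives
\[
L(\lambda v + t u + s w) \;=\; \lambda^{2}\,L\!\left(v + \tfrac{t}{\lambda} u + \tfrac{s}{\lambda} w\right).
\]
Applying $\partial^{2}/(\partial t\,\partial s)$ pulls out a factor $1/\lambda^{2}$ by the chain rule, which cancels the $\lambda^{2}$, and evaluation at $t=s=0$ yields $g_{\lambda v}(u,w)=g_v(u,w)$ immediately.

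For the two formulas involving $v$, I would first observe that $dL$ is positively homogeneous of degree $1$ in its base point: differentiating $L(\lambda x)=\lambda^{2}L(x)$ in the direction $w$ gives $\lambda\,dL_{\lambda x}(w)=\lambda^{2}\,dL_x(w)$, so $dL_{\lambda x}(w)=\lambda\,dL_x(w)$. Then I would compute $g_v(v,w)=\tfrac{1}{2}\partial_t\partial_s L(v+tv+sw)\big|_{t=s=0}$ in two stages: the $s$-derivative at $s=0$ equals $dL_{(1+t)v}(w)=(1+t)\,dL_v(w)$ by the homogeneity just established, and a further $\partial_t$ at $t=0$ produces $dL_v(w)$. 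This gives $g_v(v,w)=\tfrac{1}{2}dL_v(w)$; the middle equality in the statement is just the definition of the differential. Finally, specializing $w=v$ and invoking Euler's identity $dL_v(v)=2L(v)$ yields $g_v(v,v)=L(v)$.

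There is no real obstacle beyond careful bookkeeping of the chain-rule factors when rescaling; everything else is formal manipulation, as foreshadowed by the remark that ``basic properties of homogeneous functions'' suffice.
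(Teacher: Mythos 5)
Your argument is correct: the rescaling computation for $g_{\lambda v}=g_v$, the degree-$1$ homogeneity of $dL$ in the base point, and Euler's identity $dL_v(v)=2L(v)$ together give all three claims, and this is exactly the route the paper intends, since it offers no written proof beyond the remark that basic properties of homogeneous functions suffice. Nothing is missing.
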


\subsection{Cartan tensor} In Finsler geometry, unlike the Riemannian setting, we need to consider the third  vertical derivatives of the metric in order to define a connection. This information is contained in the {\it Cartan tensor}, which is defined as the trilinear symmetric form
\begin{equation}\label{cartantensor}
C_v(w_1,w_2,w_3)=\frac 14\left. \frac{\partial^3}{\partial s_3\partial s_2\partial s_1}L\left(v+\sum_{i=1}^3 s_iw_i\right)\right|_{s_1=s_2=s_3=0},
\end{equation}
for $v\in A$ and $w_1,w_2,w_3\in T_{\pi(v)} M$.
%\br \sout{Observe that $C_v$ is symmetric, that is, its value does not depend on the order of $w_1$, $w_2$ and $w_3$.} \er

 The following is another simple consequence of basic properties of 
homogeneous functions: 
\begin{prop}\label{Cartanprop}
 The Cartan tensor is homogeneous of degree $-1$, that is, $C_{\lambda v}=\frac{1}{\lambda}C_v$ for any $v\in A$ and $\lambda>0$. Moreover,  $C_v(v,w_1,w_2)=C_v(w_1,v,w_2)=C_v(w_1,w_2,v)= 0$ for every $v\in A$ and  $w_1,w_2\in T_{\pi(v)}M$.
\end{prop}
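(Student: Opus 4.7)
The plan is to derive both statements as direct consequences of the $2$-homogeneity of $L$, by manipulating the defining triple derivative.

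For the homogeneity of $C$, I would start from the definition
\[
C_{\lambda v}(w_1,w_2,w_3)=\frac{1}{4}\left.\frac{\partial^{3}}{\partial s_1\partial s_2\partial s_3} L\!\left(\lambda v+\sum_{i=1}^{3} s_i w_i\right)\right|_{s_1=s_2=s_3=0},
\]
and perform the rescaling $s_i=\lambda t_i$. Since $L$ is positive-homogeneous of degree $2$, one has $L(\lambda v+\lambda\sum t_i w_i)=\lambda^{2} L(v+\sum t_i w_i)$, while the chain rule gives $\partial/\partial s_i=\lambda^{-1}\partial/\partial t_i$. Combining these factors yields the overall coefficient $\lambda^{2}\cdot\lambda^{-3}=\lambda^{-1}$, so $C_{\lambda v}=\lambda^{-1}C_v$.

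For the vanishing statements, I first note that $C_v$ is symmetric in its three arguments because mixed partials commute, so it suffices to prove $C_v(w_1,w_2,v)=0$. I would rewrite $C$ in terms of the fundamental tensor by observing that, directly from the definitions,
\[
C_v(w_1,w_2,w_3)=\frac{1}{2}\left.\frac{\partial}{\partial t}\right|_{t=0} g_{v+tw_3}(w_1,w_2).
\]
Setting $w_3=v$ gives the curve $t\mapsto g_{(1+t)v}(w_1,w_2)$, which by the degree-$0$ homogeneity of $g$ asserted in Proposition~\ref{fundamentalprop} is independent of $t$. Its derivative at $t=0$ vanishes, giving $C_v(w_1,w_2,v)=0$, and then the other two identities follow from the symmetry of $C$.

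There is no serious obstacle here: once $C$ is expressed as a $t$-derivative of $g_{v+tw_3}$, both claims are one-line consequences of previously established homogeneity properties. The only subtlety to keep in mind is that the reparametrization $s_i\mapsto\lambda t_i$ is valid only for $\lambda>0$, which matches the hypotheses and the conic structure of $A$.
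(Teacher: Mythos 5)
Your proof is correct and is precisely the ``simple consequence of basic properties of homogeneous functions'' that the paper invokes without writing out: the rescaling $s_i=\lambda t_i$ together with the $2$-homogeneity of $L$ gives the degree $-1$, and the identity $C_v(w_1,w_2,w_3)=\tfrac12\left.\frac{\partial}{\partial t}\right|_{t=0}g_{v+tw_3}(w_1,w_2)$ (with the correct factor $\tfrac12$) reduces the vanishing of the contraction with $v$ to the $0$-homogeneity of $g$ from Proposition~\ref{fundamentalprop}, the remaining identities following from the symmetry of $C_v$. Nothing is missing.
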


%\begin{rem}\label{porvenir}
%In the case of a pseudo-Finsler manifold $(M,L)$, the Cartan tensor is symmetric. This together with Proposition \ref{Cartanprop} means that $C_v(v,w_1,w_2)=C_v(w_1,v,w_2)=C_v(w_1,w_2,v)=0$ for any $v\in A$ and $w_1,w_2\in T_{\pi(v)}M$.
%\end{rem}
\newcommand{\C}{\mathcal{C}}
\subsection{Pseudo-Finsler metrics with the same lightlike cone} Let us show that fixing  the lightlike cone of a pseudo-Finsler metric is the same  as fixing the anisotropic conformal class. 
First, we will see some properties of the {\em lightcone} of a pseudo-Finsler metric $L:A\rightarrow \R$, defined as $\C=\{v\in A: L(v)=0\}$.
\begin{lemma}\label{lem:previous}
A  pseudo-Finsler metric $L:A\rightarrow \R$ in a manifold $M$ has $0$ as a regular value and the lightcone $\C$ of $L$   (if not empty)  is  a smooth  hypersurface of $TM$, transversal to all the tangent spaces $T_pM$, with $p\in M$.
	\end{lemma}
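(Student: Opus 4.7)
The plan is to use the identity $g_v(v,w)=\tfrac12 dL_v(w)$ from Proposition \ref{fundamentalprop} together with nondegeneracy of $g_v$ to force $dL_v$ to have nontrivial vertical part at every $v\in\mathcal C$. From this both conclusions (regular value and fiber-transversality) will follow immediately.

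First, I would show that $0$ is a regular value of $L$. Fix $v\in\mathcal C$; I need $dL_v\neq 0$ as a map $T_v(TM)\to\R$. The vertical subspace at $v$ is canonically identified with $T_{\pi(v)}M$, and under this identification Proposition \ref{fundamentalprop} reads
\[
dL_v(w)=2\,g_v(v,w)\qquad\text{for all }w\in T_{\pi(v)}M.
\]
If $dL_v$ vanished on the vertical subspace, then $g_v(v,\cdot)\equiv 0$ on $T_{\pi(v)}M$; nondegeneracy of $g_v$ would give $v=0$, contradicting $v\in A\subset TM\setminus\mathbf{0}$. Hence $dL_v\neq 0$, so $0$ is a regular value of $L:A\to\R$, and by the regular value theorem $\mathcal C=L^{-1}(0)$ is a smooth embedded hypersurface of $A$, hence of $TM$.

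Second, I would verify transversality of $\mathcal C$ with each fiber $T_pM$. At any $v\in\mathcal C\cap T_pM$ the tangent space $T_v(T_pM)$ is exactly the vertical subspace at $v$. By the display above, $dL_v$ restricted to this subspace equals $2g_v(v,\cdot)$, which is nonzero since $g_v$ is nondegenerate and $v\neq 0$. Therefore $T_v(T_pM)$ is not contained in $T_v\mathcal C=\ker dL_v$, and a dimension count gives $T_v\mathcal C+T_v(T_pM)=T_vTM$, i.e.\ transversality.

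There is no real obstacle here; the only subtlety is the standard identification of the vertical subspace $T_v(T_pM)\subset T_v(TM)$ with $T_pM$, which is what lets Proposition \ref{fundamentalprop} be applied to $dL_v$ viewed as a covector on $TM$. Both claims then reduce to the single observation that the radial direction is never null for the bilinear form $g_v$.
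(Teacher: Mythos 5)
Your proof is correct and follows essentially the same route as the paper: both use $dL_v(w)=2g_v(v,w)$ together with nondegeneracy of $g_v$ (and $v\neq 0$) to get a vertical vector on which $dL_v$ is nonzero, conclude via the regular value/implicit function theorem that $\mathcal{C}$ is a smooth hypersurface, and obtain transversality to the fibers $T_pM$ from that same vertical direction. Your explicit identification of $T_v(T_pM)$ with the vertical subspace and the dimension count simply spell out what the paper leaves implicit.
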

\begin{proof}
	Observe that $0$ is a regular value of $L$, because by Prop. \ref{fundamentalprop}, $d_vL(w)= 2g_v(v,w)$,  and since $g_v$ is nondegenerate, $d_vL$ cannot be identically zero  (recall that $0\notin A$), and then for a fixed $\tilde v\in A$, there exists a vector $\tilde w$ such that $g_{\tilde v}(\tilde v,\tilde w)\not=0$.  Moreover, it follows from the implicit function theorem that $L^{-1}(0)=\C$   (if not empty)  is a smooth hypersurface and that the vector  $\tilde w$ is transversal to $\C$  at $\tilde v$.  Hence,  $\C$ is transversal to every $T_pM$, $p\in M$.
	\end{proof}
\begin{defi}\label{def:anisot}
	We will say that two pseudo-Finsler metrics defined in the same open conic subset $L_1,L_2:A\rightarrow \R$ are {\em anisotropically equivalent} if there exists a smooth function $\mu:A\rightarrow\R$ without zeroes such that $L_2=\mu L_1$.
	\end{defi}
 Observe that if $L_1$ and $L_2$ do not vanish away from the zero section, then they are always anisotropically equivalent, as it happens for example with classical Finsler metrics. The interest of this concept comes into play when there is a non-empty lightcone. 

The next theorem is a generalization of \cite[Th. 3.11]{JS18}, written in the context of Lorentz-Finsler metrics, to pseudo-Finsler metrics of arbitrary index.
\begin{thm}\label{t_anisotropic}
	Two pseudo-Finsler metrics $L_1,L_2: A\rightarrow \R$ are anisotropically equivalent if and only if their lightcones coincide.  
	Moreover,  in such a case,  the factor of anisotropy $\mu=L_2/L_1$ on $A$ can be computed as  
	\begin{equation}\label{e_mu}
	\mu(v)=\frac{g^2_v(v,w)}{g^1_v(v,w)},
	\end{equation}
	where $g^1$ and $g^2$ are the fundamental tensors of $L_1$ and $L_2$, respectively,  and  $w$  is any vector in $T_{\pi(v)}M$ such that $g^1_v(v,w)\neq 0$ (and, thus, $g^2_v(v,w)\neq 0$). 
\end{thm}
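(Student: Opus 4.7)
\emph{Proof plan.} The direction ($\Leftarrow$) will be immediate: if $L_2=\mu L_1$ with $\mu$ nowhere vanishing, then $L_1(v)=0\iff L_2(v)=0$, so $\C_1=\C_2$.

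For ($\Rightarrow$), I would assume $\C_1=\C_2=:\C$. Off $\C$ the function $\mu:=L_2/L_1$ is already smooth and nonzero, so the real task is to extend it smoothly and without zeros across $\C$. I would work locally near an arbitrary $v_0\in\C$: by Lemma~\ref{lem:previous} applied to $L_1$, the hypersurface $\C$ is the regular level set $L_1^{-1}(0)$, so $L_1$ is a local defining function for $\C$. Since $L_2$ also vanishes on $\C$, the standard Hadamard-type division lemma (choose a chart around $v_0$ whose first coordinate is $L_1$ and integrate $\partial L_2/\partial L_1$ along that coordinate) supplies a smooth function $\mu$ on a neighborhood of $v_0$ with $L_2=\mu L_1$. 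Off $\C$ this local $\mu$ must coincide with $L_2/L_1$, so the local extensions glue into a globally defined smooth $\mu:A\to\R$.

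Next I would rule out zeros of $\mu$. Assume $\mu(v_0)=0$ for some $v_0\in\C$. Differentiating $L_2=\mu L_1$ vertically at $v_0$ in an arbitrary direction $w\in T_{\pi(v_0)}M$ yields
\[
dL_2|_{v_0}(w)=\mu(v_0)\,dL_1|_{v_0}(w)+L_1(v_0)\,d\mu|_{v_0}(w)=0,
\]
because $\mu(v_0)=0$ and $L_1(v_0)=0$. Hence $dL_2|_{v_0}\equiv 0$, contradicting Lemma~\ref{lem:previous} applied to $L_2$, which furnishes a $\tilde w$ with $dL_2|_{v_0}(\tilde w)=2g^2_{v_0}(v_0,\tilde w)\neq 0$.

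The formula~\eqref{e_mu} then falls out of the same vertical derivative, now read on $\C$: by Proposition~\ref{fundamentalprop} one has $dL_i|_v(w)=2g^i_v(v,w)$, so for $v\in\C$ the identity above with arbitrary $w$ collapses to $2g^2_v(v,w)=2\mu(v)\,g^1_v(v,w)$, which is exactly~\eqref{e_mu}; for $v\notin\C$ the choice $w=v$ together with $g^i_v(v,v)=L_i(v)$ recovers $\mu(v)=L_2(v)/L_1(v)$. I expect the main obstacle to be the local division step across $\C$, since promoting the a priori indeterminate quotient $L_2/L_1$ to a smooth function on $\C$ is the substantive content of the theorem; everything else reduces to careful use of Proposition~\ref{fundamentalprop} and of the regularity statement in Lemma~\ref{lem:previous}.
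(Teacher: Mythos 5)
Your argument for the equivalence is correct, but it is a genuinely different route from the paper's. The paper does not redo the division across the lightcone at all: its proof consists of reducing to \cite[Lemma 3.10]{JS18}, and the only real work is the sign bookkeeping needed for that reduction --- the observation that a pseudo-Finsler metric changes sign across $\C$ (because $\tfrac{d}{dt}L(v+tw)=2g_{v+tw}(v+tw,w)\neq 0$ for $w$ transversal to $\C$), so that on each connected component of $A$ the metrics $L_1,L_2$ have either the same or opposite signs and the lemma can be applied componentwise to $\pm L_1$, $\pm L_2$; the formula \eqref{e_mu} is then deferred to the proof of \cite[Th. 3.11]{JS18}. You instead make the division self-contained: local Hadamard division using $L_1$ as a defining function for $\C$ (legitimate by Lemma \ref{lem:previous}), gluing via density of $A\setminus\C$, and exclusion of zeros of $\mu$ on $\C$ from the nondegeneracy of $g^2$. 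What your route buys is independence from the quoted lemma and, notably, the possibility of skipping the sign discussion entirely, since your no-zero argument is indifferent to whether $\mu$ is positive or negative on a component; what the paper's route buys is brevity, by outsourcing exactly the step you identify as the substantive one.

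One caveat concerns the ``moreover'' clause. Your vertical differentiation of $L_2=\mu L_1$ actually gives $g^2_v(v,w)=\mu(v)\,g^1_v(v,w)+\tfrac12 L_1(v)\,\partial^\nu\mu_v(w)$, so your derivation yields \eqref{e_mu} for \emph{arbitrary} $w$ with $g^1_v(v,w)\neq 0$ only at points of the lightcone, where $L_1(v)=0$; off the cone you verify it only for $w=v$ (equivalently, for $w$ annihilated by the vertical differential of $\mu$), and the extra term shows the unrestricted claim can genuinely fail there when $\mu$ is anisotropic. So this is a limitation of the statement as literally written rather than of your argument, but you should say explicitly that the ``any $w$'' version is obtained on $\C$ (which is where it is needed for lightlike directions), while elsewhere \eqref{e_mu} is recovered through the choice $w=v$, i.e.\ as $\mu=L_2/L_1$, via Proposition \ref{fundamentalprop}.
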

\begin{proof}
	Observe that if $L_1$ and $L_2$ are anisotropically equivalent, then it follows straightforwardly that they have the same lightcone. For the converse, consider a connected component $\tilde A$ of $A\setminus \{\C\cup \bf 0\}$. Then $L_1$ and $L_2$ do not change sign in $\tilde A$. Assume that both of them are positive on $\tilde A$ and apply \cite[Lemma 3.10]{JS18} taking into account Lemma \ref{lem:previous},  to conclude that $L_2=\mu L_1$, for a smooth $\mu>0$. If $L_1$ or $L_2$ are negative on $\tilde A$, apply \cite[Lemma 3.10]{JS18} to $\tilde L_1=-L_1$ and/or $\tilde L_2=-L_2$.  Observe also that a pseudo-Finsler metric always changes the sign in the lightcone, since for $v\in \C$ and $w$ transversal to $\C$ in $v$, one has that $\frac{d}{dt}L(v+tw)=2g_{v+tw}(v+tw,w)\not=0$ in a neighborhood of $t=0$. This implies that $L_1$ and $L_2$ have always either the same sign or opposite sign in a connected component of $A$ and then one can apply  \cite[Lemma 3.10]{JS18} to the whole connected component.  The last part follows the same lines as in the proof of \cite[Th. 3.11]{JS18}.
	\end{proof}

\subsection{Chern connection, covariant derivative  and curvature tensor}\label{chernsection}

Assume that $(M,L)$ is a pseudo-Finsler manifold with domain $A\subset TM$, and denote by $\mathfrak X(M)$ the module of smooth vector fields on $M$.    Let us introduce the approach to connections in Finsler Geometry collected in \cite{J16,J19}.
	An  {\em anisotropic (linear) connection} is  a map
\[\nabla: A\times \mathfrak{X}(M)\times\mathfrak{X}(M)\rightarrow TM,\quad\quad (v,X,Y)\mapsto\nabla^v_XY:=\nabla(v, X,Y)\in T_{\pi(v)}M,\]
such that
\begin{enumerate}[(i)]
	\item $\nabla^v_X(Y+Z)=\nabla^v_XY+\nabla^v_XZ$, for any $X,Y,Z\in  {\mathfrak X}(M)$,
	\item $\nabla^v_X(fY)=X(f) Y_{\pi(v)}+f(\pi(v)) \nabla^v_XY $ for any $f\in {\mathcal F}(M)$, $X,Y\in  {\mathfrak X}(M)$,
	\item for any $X,Y\in  {\mathfrak X}(M)$,  the map $A\ni v\rightarrow \nabla^v_XY$ is smooth,
	\item $\nabla^v_{fX+hY}Z=f(\pi(v))\nabla^v_XZ+h(\pi(v)) \nabla^v_YZ$, for any $f,h\in {\mathcal F}(M)$, $X,Y,Z\in  {\mathfrak X}(M)$.
\end{enumerate}
% An  {\em anisotropic (linear) connection} is  a map
% \[\nabla: (A\cap T\Omega) \times \mathfrak{X}(\Omega)\times\mathfrak{X}(\Omega)\rightarrow TM,\quad\quad (v,X,Y)\mapsto\nabla^v_XY:=\nabla(v, X,Y)\in T_{\pi(v)}M\]
%for every open subset $\Omega\subset M$, smooth in $v$ when $X$ and $Y$ are fixed, such that 
%\begin{enumerate}[(i)]
% \item $\nabla^v_X(Y+Z)=\nabla^v_X Y+\nabla^v_X Z$, $X,Y,Z\in  {\mathfrak X}(\Omega)$, $v\in A\cap T\Omega$,
% \item $\nabla^v_X(fY)=X(f) Y+f \nabla^vY $ for any $f\in {\mathcal F}(M)$, $X,Y\in  {\mathfrak X}(\Omega)$,
%  \item $\nabla^v_{f_1X+f_2Y}Z=f_1\nabla^v_XZ+f_2\nabla^v_YZ $ for any $f_1,f_2\in {\mathcal F}(M)$, $X,Y\in  {\mathfrak X}(\Omega)$,
% \item $(\nabla^{V}_XY)(p):=\nabla^{V(p)}_XY$, for $p\in \Omega$, is smooth for any $A$-admissble $V\in {\mathfrak X}(\Omega)$,  and any $X,Y\in  {\mathfrak X}(\Omega)$.
%  \end{enumerate}
Here, ${\mathcal F}(M)$ denotes the space of real smooth functions on $M$.  We will say that $V\in\mathfrak X(\Omega)$ is {\it $A$-admissible} if $V_p\in A$ for every $p\in\Omega$. From now on,  given an $A$-admissible vector field $V$ in $\Omega$, we will construct classical tensors $g_V$ and $C_V$ on $\Omega$ using the fundamental and the Cartan tensors, namely, for every $p\in M$, these tensors are given by $g_{V_p}$ and $C_{V_p}$, respectively. In particular, $g_V$ is usually called {\it the osculating metric}  with  respect to the vector field $V$. We will also define an affine connection $\nabla^V$ as $(\nabla^{V}_XY)_p:=\nabla^{V_p}_XY$, for every $p\in \Omega$  and any $X,Y\in  {\mathfrak X}(\Omega)$.
%\footnote{\bv Aqui mudei um pouquinho a definiçao de conexao, pois nao precisa trabalhar em $\Omega$ e tem que adicionar a condiçao (iii). \ev} \ev
%\[
% \begin{cases}
%  g_V: \mathfrak X(\Omega)\times \mathfrak X(\Omega)\mapsto  C^\infty(\Omega), \quad  g_V(X,Y)(p):= g_{V(p)}(X(p),Y(p))\, \\
%  C_V: \mathfrak X(\Omega)\times \mathfrak X(\Omega)\times \mathfrak X(\Omega)\mapsto C^\infty(\Omega),  \quad C_V(X,Y,Z)(p):= C_{V(p)}(X(p),Y(p),Z(p))%\quad (p\in\Omega)
% \end{cases}
%\]
% Then the mapping
%\[
% \begin{cases}
%  g_V: (X,Y)\in\mathfrak X(\Omega)\times \mathfrak X(\Omega)\mapsto g_V(X,Y)\in C^\infty(\Omega),\\
%  g_V(X,Y)(p):= g_{V(p)}(X(p),Y(p))\quad (p\in\Omega)
% \end{cases}
%\]
%is a pseudo-Riemannian metric on $\Omega$. We construct in the same way a type $(0,3)$ tensor field $C_V$ on $\Omega$ from the Cartan tensor.

% One can associate to any $L$-admissible vector field $V\in\mathfrak X(\Omega)$ an affine connection
%\[
% \nabla^V:\mathfrak X(\Omega)\times \mathfrak X(\Omega)\to \mathfrak X(\Omega)
%\]
 Then, the {\em Chern connection} is the unique anisotropic connection $\nabla$ such that for every $A$-admissible vector field $V$ defined in $\Omega\subset M$, the associated affine connection $\nabla^V$ satisfies  the following two properties: 
\begin{itemize}
 \item[(i)] $\nabla^V_XY-\nabla^V_YX=[X,Y]$ for all $X,Y\in\mathfrak X(\Omega)$ 
({\it torsion freeness});
 \item[(ii)] $X(g_V(Y,Z))=g_V(\nabla^V_XY,Z)+g_V(Y,\nabla^V_XZ)+2C_V(\nabla^V_XV,Y,Z)$ for all $X,Y,Z\in \mathfrak X(\Omega)$ ({\it almost $g$-compatibility}).
\end{itemize}
%We say that $\nabla^V$ is the {\it Chern connection} of  $(M,L)$  associated to the $L$-admissible vector field $V\in\mathfrak X(\Omega)$.
(see \cite{Mat80}, \cite[Eq. (7.20) and(7.21)]{Sh01} and \cite{J13}). It is easy to see that $\nabla$ is positive homogeneous of degree $0$ at every $v\in A$, i.e., $\nabla^v=\nabla^{\lambda v}$ for all positive $\lambda$. 

 Given a smooth function $f:A\rightarrow \R$ and a vector field $X\in \mathfrak{X}(M)$, there are two possible derivatives, the so-called vertical derivative $\partial^\nu f(X)$, which is defined as
\[\partial^\nu f_v(X)=\left.\frac{d}{dt}\right|_{t=0}f(v+tX_{\pi(v)}),\]
and the derivative induced by the Chern connection $\nabla$, which gives a function $\nabla_X f$. To define this function at $v\in A$, one can use an $A$-admissible extension $V$ of $v$ in some open subset $\Omega$, then
\begin{equation}\label{derivf}
\nabla_X f(v)=X_{\pi(v)}(f(V))-\partial^\nu f_v(\nabla^V_XV).
\end{equation}
It is not difficult to see that it is well-defined as the result does not depend on the extension $V$ (see \cite[Lemma.9]{J16}). Moreover, we can define two different  gradients of $f$ as follows. The vertical gradient at $v\in A$, $(\nabla^\nu f)_v$,  is determined by 
\begin{equation}\label{nablanu}\partial^\nu f_v(X)=g_v((\nabla^\nu f)_v,X)
\end{equation}
for every $X\in {\mathfrak X}(M)$. While the horizontal gradient at $v\in A$, $(\nabla^h f)_v$, is given by
\begin{equation}\label{nablah}
\nabla_X f (v)=g_v((\nabla^h f)_v,X),
\end{equation}
for every $X\in {\mathfrak X}(M)$. Both, $\nabla^\nu f$ and $\nabla^h f$ can be thought as anisotropic vector fields (see \cite{J16} and \cite{J19}).

 We can also define the {\em curvature tensor associated with an anisotropic connection}, which can be computed in $v\in A$ with the help of an $A$-admissible extension $V$ of $v$ in an open subset $\Omega\subset M$ as
\begin{equation}\label{RvRV}
R_v(X,Y)Z=(R^V(X,Y)Z-P_{\bv V \ev}(Y,Z,\nabla^V_XV)+P_{\bv V\ev}(X,Z,\nabla^V_YV))_{\pi(v)},
\end{equation}
where $V,X,Y,Z\in {\mathfrak{X}}(\Omega)$, $R^V$ is the curvature tensor of the affine connection $\nabla^V$ and
\[P_v(X,Y,Z)=\frac{d}{dt} \big(\left.\nabla^{v+tZ(\pi(v))}_XY\big)\right|_{t=0},\]
 (see \cite[Prop. 2.5]{J19}). 
%In fact, $\Gamma^k_{ij}(V)=\Gamma^k_{ij}\circ V$ where $\Gamma^k_{ij}:\pi^{-1}(\Omega)\cap A\to\R$ are the Christoffel symbols of the Chern connection (see \cite[Proposition 2.6]{J13}).

  Now we suppose that $\Omega$ is a chart domain with coordinate system 
\[
 x=(x^1,\dots,x^n):\Omega\to x(\Omega)\subset \R^n.
\]
The {\it Christoffel symbols} of $\nabla$ with respect to the chart $(\Omega,x)$ are the smooth functions $\Gamma^k_{ij}:T\Omega\cap A\to\R$ such that
\[
 \nabla^v_{\frac\partial{\partial x^i}}\left(\frac\partial{\partial x^j}\right)=\Gamma^k_{ij}(v)\left.\frac\partial{\partial x^k}\right|_{\pi(v)};\quad i,j\in\{1,\dots,n\},
\]
 using Einstein convention for summation.
 
 \subsection{Geodesics}
 Given a smooth curve $\gamma:[a,b]\to M$, we denote by $\mathfrak X(\gamma)$ the $C^\infty([a,b])$-module of vector fields along $\gamma$. We say that $U\in\mathfrak X(\gamma)$ is $A$-admissible if $U(t)\in A$ for all $t\in [a,b]$. In particular,  we say that $\gamma$ itself is $A$-admissible if $\dot\gamma(t)\in A$ for every $t\in [a,b]$. For every $A$-admissible vector field $U\in\mathfrak X(\gamma)$, the Chern connection induces a covariant derivative $D^U_\gamma:\mathfrak X(\gamma)\to\mathfrak X(\gamma)$ along $\gamma$, given locally,  when $\gamma$ is contained in the chart domain $\Omega$,  by
\begin{equation}\label{connection}
 D^U_\gamma X=\left(\dot X^k+X^i \dot\gamma^j (\Gamma^k_{ij}\circ U)\right)\left.\frac\partial{\partial x^k}\right|_{\gamma},
\end{equation}
where $X=X^i\left.\frac\partial{\partial x^i}\right|_\gamma$, $\dot\gamma= \dot\gamma^i\left.\frac\partial{\partial x^i}\right|_{\gamma}$ (see, again, \cite[Prop. 2.6]{J13}). The induced covariant derivative is also almost $g$-compatible.

 \begin{defi}
 A smooth $A$-admissible curve $\gamma$ of a pseudo-Finsler manifold $(M,L)$ is called a {\it geodesic} if $D^{\dot\gamma}_\gamma \dot\gamma=0$.
 %its velocity vector field $\dot\gamma$ is parallel along $\gamma$. 
\end{defi} 

 \begin{rem}\label{lightlike}
 If $\gamma:[a,b]\to M$ is a geodesic of $(M,L)$, then the function 
$L\circ\dot\gamma:[a,b]\to\R$ is constant. Indeed, as we will see below in \eqref{basic},   if $\phi=L\circ\dot\gamma$, then $\dot\phi=2g_{\dot\gamma}(D^{\dot\gamma}_\gamma\dot\gamma,\dot\gamma)=0$.  

%\br If $L\circ\dot\gamma=0$, then $\gamma$ is called a {\it lightlike geodesic}.\er
\end{rem}

\section{Variations of the energy of a conformal metric}\label{section:light}
In this section, we examine the effects of conformal transformations on 
lightlike curves, that is, curves $\gamma$ such that $L\circ \dot\gamma = 0$. We prove that %we prove that 
some key geometric properties of these curves (such as being a geodesic, 
and having conjugate or focal points) are preserved up to reparametrization by such 
transformations  (see also Remark \ref{lightlike}).
\subsection{First variation of the energy}
  Given a pseudo-Finsler manifold $(M,L)$,  with $L:A\subset TM\setminus {\bf 0}\rightarrow \R$,  we shall denote by $C_L(M,[a,b])$ the space of $A$-admissible  smooth curves in $M$ defined on the closed interval $[a,b]$.  Let $\lambda : A\rightarrow (0,+\infty)$ be an arbitrary positive smooth function  homogeneous of degree zero and assume that $\lambda L:A\rightarrow \R$ is also a pseudo-Finsler metric (and therefore its fundamental tensor is non-degenerate). 
We want to consider variations of the energy functional of $\lambda L$.  Given a  smooth curve $\gamma:[a,b]\rightarrow M$, let us denote
\begin{equation*}
E_{\lambda} : \gamma\in C_L(M,[a,b])\mapsto E_{\lambda}(\gamma)=\frac 12\int_a^b \lambda(\dot\gamma(t))L(\dot\gamma(t))dt,
\end{equation*}
 (from now on, we will omit to write the integration parameter). 

Throughout this section we will always use the Chern covariant derivative $D_\gamma$ along a smooth curve $\gamma$ associated with $L$. Indeed, in the following we will try to express the first and second variations of the energy $E_\lambda$ in terms of $D_\gamma$ rather than using the covariant derivative associated with $\lambda L$. Moreover, $\mathscr{L}_L$ denotes the Legendre transform of $L$, namely, the map $\mathscr{L}_L: A\rightarrow TM^*$, where $\mathscr{L}_L(v)$ is defined as the one-form given by $\mathscr{L}_L(v)(w)=g_v(v,w)$ for every $w\in T_{\pi(v)}M$. 

%\bb Given real functions $f_M: M \rightarrow \R$ and $f_A: A \rightarrow \R$, we will denote by 
% $\nabla^vf_M$ the gradient with respect to $g_v$ and by $\grad{v} f_A$ the "vertical" gradient with respect to $g_v$.
%They are determined by satisfying, for all $w\in T_{\pi(v)}M$, respectively, \linebreak $g_v(w,\nabla^v f_M)= df_{M \, \pi(v)}(w)$ and 
%$g_v(w,\grad{v} f_A)= df_{A \, v}(\ell^{\text{v}}_v (w) )$.\eb 
%\bb We will also use the same notation for  variations as in \cite[\S 3]{JavSoa15}: \sout{On the other hand,}\eb 

Let $\gamma\in C_L(M,[a,b])$, and consider
a  smooth variation $\Lambda:[a,b]\times (-\varepsilon,\varepsilon)\rightarrow M,\ (t,s)\mapsto\Lambda(t,s)$ of $\gamma$. 
  Given $s_o\in (-\varepsilon,\varepsilon)$ and $t_o\in [a,b]$, we will denote by $\gamma_{s_o}:[a,b]\rightarrow M$ the curve defined as $\gamma_{s_o}(t)=\Lambda(t,s_o)$ for every $t\in [a,b]$ and by
$\beta_{t_o}:(-\varepsilon,\varepsilon)\rightarrow M$ the curve defined as $\beta_{t_o}(s)=\Lambda(t_o,s)$ for every $s\in (-\varepsilon,\varepsilon)$,  which are the longitudinal and the transversal curves of the variation,  respectively. Moreover,  we will use the notation $\frac{\partial\Lambda}{\partial t}(t,s)=\dot\gamma_s(t)$ and 
$\frac{\partial\Lambda}{\partial s}(t,s)=\dot\beta_t(s)$, and we will denote by $W$ the variational vector field of $\Lambda$ along $\gamma$, namely, $W(t)=\frac{\partial\Lambda}{\partial s}(t,0)$ for every $t\in[a,b]$.  We will say that the variation is $A$-admissible if $\gamma_s\in C_L(M,[a,b])$ for all $s\in (-\varepsilon,\varepsilon)$. 

Notice that when we have a variation of curves (or more generally a two parameters map), the fact that the Chern connection is  torsionfree  implies the following property:
\begin{equation}\label{commut}
D_{\gamma_s}^V{\dot\beta_t}=D_{\beta_t}^V{\dot\gamma_s},
\end{equation} 
(see \cite[Prop. 3.2]{J13}).

\begin{prop}\label{firstvar2}
Assume that $\gamma:[a,b]\rightarrow M$ is an $A$-admissible 
smooth lightlike curve having an $A$-admissible piecewise smooth variation 
$\Lambda:[a,b]\times (-\varepsilon,\varepsilon)\rightarrow M$. 
Then 
\begin{multline}\label{firstvariation2}
E_{\lambda}'(0)=\frac{d}{ds}E_{\lambda}(\gamma_s)\left|_{s=0}\right. =
  \int_a^bg_{\dot\gamma}\Big(W, %\frac 12 L(\dot\gamma)(\nabla^h\lambda)_{\dot\gamma}
- D_\gamma^{\dot\gamma}\big( \lambda(\dot\gamma)\dot\gamma\big)\Big)dt% +\frac{1}{2} L(\dot\gamma)(\nabla^\nu \lambda)_{\dot\gamma}\big)\Big)dt 
 + \left[\lambda(\dot\gamma)\mathscr{L}_L(\dot\gamma)(W)\right]_a^b. %+g_{\dot\gamma}(L(\dot\gamma)(\nabla^\nu\lambda)_{\dot\gamma}),W)\right]_a^b \ev
%\\\bv +\sum_{i=1}^h\big(\lambda(\dot\gamma(t_i^+)) \mathscr{L}_L(\dot\gamma(t_i^+))(W(t_i))-\lambda(\dot\gamma(t_i^-))\mathscr{L}_L(\dot\gamma(t_i^-))(W(t_i))\big).\ev
%\\\bv +\sum_{i=1}^h (g_{\dot\gamma(t_i^+)}(L(\dot\gamma(t_i^+))(\nabla^\nu\lambda)_{\dot\gamma(t_i^+)},W(t_i))-g_{\dot\gamma(t_i^+)}(L(\dot\gamma(t_i^+))(\nabla^\nu\lambda)_{\dot\gamma(t_i^+)},W(t_i)). \ev
\end{multline}
\end{prop}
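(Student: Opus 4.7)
The plan is to differentiate under the integral sign, use the lightlike hypothesis to eliminate the direct contribution from $\lambda$, express the remaining variation through the Chern covariant derivative of $L$, and integrate by parts once. Only the Chern connection of $(M,L)$ is involved.

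First I would write
\[
E_\lambda'(0) = \frac{1}{2}\int_a^b \Big[\big(\partial_s\lambda(\dot\gamma_s)|_{s=0}\big)\,L(\dot\gamma) + \lambda(\dot\gamma)\,\partial_s L(\dot\gamma_s)|_{s=0}\Big]\,dt.
\]
The hypothesis $L\circ\dot\gamma\equiv 0$ kills the first summand; this is the only place the lightlike condition enters, and it spares us from having to split the anisotropic derivative of $\lambda$ into horizontal and vertical parts as in \eqref{derivf}. For the second summand, viewing $\dot\gamma_s(t)$ as an $A$-admissible vector field along the transverse curve $s\mapsto \beta_t(s)$ and combining $L(V)=g_V(V,V)$ (Prop.~\ref{fundamentalprop}) with almost $g$-compatibility and $C_V(V,\cdot,\cdot)=0$ (Prop.~\ref{Cartanprop}) yields $\partial_s L(\dot\gamma_s) = 2\,g_{\dot\gamma_s}(D^{\dot\gamma_s}_{\beta_t}\dot\gamma_s,\dot\gamma_s)$. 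The torsion-free commutation \eqref{commut} turns this into $2\,g_{\dot\gamma}(D^{\dot\gamma}_\gamma W,\dot\gamma)$ at $s=0$, so
\[
E_\lambda'(0) = \int_a^b \lambda(\dot\gamma)\,g_{\dot\gamma}(D^{\dot\gamma}_\gamma W,\dot\gamma)\,dt.
\]

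To finish, I would integrate by parts. Almost $g$-compatibility along $\gamma$, applied with $V=Z=\dot\gamma$ and $Y=W$, gives
\[
\tfrac{d}{dt}\, g_{\dot\gamma}(W,\dot\gamma) = g_{\dot\gamma}(D^{\dot\gamma}_\gamma W,\dot\gamma) + g_{\dot\gamma}(W,D^{\dot\gamma}_\gamma\dot\gamma),
\]
with the Cartan correction vanishing by Prop.~\ref{Cartanprop}. Solving for the first right-hand term, integrating by parts against $\lambda(\dot\gamma)$ (a scalar function of $t$), and collapsing via the Leibniz rule $D^{\dot\gamma}_\gamma(\lambda(\dot\gamma)\dot\gamma) = \tfrac{d}{dt}\lambda(\dot\gamma)\cdot\dot\gamma + \lambda(\dot\gamma)\,D^{\dot\gamma}_\gamma\dot\gamma$ produces the interior integrand $-g_{\dot\gamma}(W,D^{\dot\gamma}_\gamma(\lambda(\dot\gamma)\dot\gamma))$ and the boundary term $[\lambda(\dot\gamma)g_{\dot\gamma}(\dot\gamma,W)]_a^b = [\lambda(\dot\gamma)\mathscr{L}_L(\dot\gamma)(W)]_a^b$. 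For piecewise smooth $W$, the integration is carried out on each smooth subinterval and the intermediate boundary contributions telescope away since $W$ is continuous.

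The main obstacle lies in the anisotropy of $\lambda$: in a generic variation of $E_\lambda$ one would have to compute $\partial_s\lambda(\dot\gamma_s)|_{s=0}$ via the horizontal/vertical decomposition, producing extra terms involving $\nabla^h\lambda$ and $\nabla^\nu\lambda$. The lightlike restriction $L\circ\dot\gamma=0$ is precisely what eliminates these terms and keeps the final formula as clean as in the classical Lorentzian case, leaving only scalar-function Leibniz manipulations to reach \eqref{firstvariation2}.
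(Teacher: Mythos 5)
Your proposal is correct and follows essentially the same route as the paper: differentiate under the integral, use $L(\dot\gamma)=0$ to discard the $\partial_s\lambda$ term, apply almost $g$-compatibility with the Cartan terms vanishing by Prop.~\ref{Cartanprop} together with the torsion-free commutation \eqref{commut}, and integrate by parts to produce $-g_{\dot\gamma}\big(W,D^{\dot\gamma}_\gamma(\lambda(\dot\gamma)\dot\gamma)\big)$ plus the boundary term. The only cosmetic difference is that you evaluate at $s=0$ before integrating by parts and split the Leibniz rule for $\lambda(\dot\gamma)\dot\gamma$ into two scalar steps, whereas the paper performs the integration by parts at general $s$ with $\lambda(\dot\gamma_s)\dot\gamma_s$ kept inside the covariant derivative.
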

\begin{proof}
	Observe that for any $s\in (-\varepsilon,\varepsilon)$, we have
	\begin{multline}\label{conffirstvar}
	\frac{d}{ds}E_{\lambda}(\dot\gamma_s)
	= \frac 12 \int_a^b \frac{d}{ds}\left(\lambda(\dot\gamma_s)
	g_{\dot\gamma_s}\left(\dot\gamma_s,\dot\gamma_s\right)\right)dt \\
	= \frac 12 \int_a^b\left( \frac{d}{ds}\lambda(\dot\gamma_s)\right)
	g_{\dot\gamma_s}\left(\dot\gamma_s,\dot\gamma_s\right)dt +
	\frac 12 \int_a^b \lambda(\dot\gamma_s)
	\frac{d}{ds}g_{\dot\gamma_s}\left(\dot\gamma_s,\dot\gamma_s\right)dt.
	\end{multline}
	
	Furthermore, almost $g$-compatibility of the Chern 
	connection,  Prop. \ref{Cartanprop} and  the identity \eqref{commut} 
	 imply
	\begin{align}
	\nonumber\frac 12 \frac{d}{ds} g_{\dot\gamma_s}(\dot\gamma_s,\dot\gamma_s)
	=& g_{\dot\gamma_s}(D_{\beta_t}^{\dot\gamma_s}\dot\gamma_s,\dot\gamma_s)
	+C_{\dot\gamma_s}(D_{\beta_t}^{\dot\gamma_s}\dot\gamma_s,\dot\gamma_s,\dot\gamma_s)\\ \label{basic}
	=& g_{\dot\gamma_s}(D_{\gamma_s}^{\dot\gamma_s} \dot\beta_t,\dot\gamma_s).
	\end{align} 
	So, the second term on the right-hand side of \eqref{conffirstvar} equals
	\begin{multline}\label{conffirstvar2}
	\int_a^b g_{\dot\gamma_s}(D^{\dot\gamma_s}_{\gamma_s}\dot\beta_t,\lambda(\dot\gamma_s)\dot\gamma_s) dt\\=\int_a^b \frac{d}{dt}g_{\dot\gamma_s}(\dot\beta_t , \lambda(\dot\gamma_s)\dot\gamma_s)dt 
	- \int_a^bg_{\dot\gamma_s}\Big(\dot\beta_t , D_{\gamma_s}^{\dot\gamma_s}(\lambda(\dot\gamma_s)\dot\gamma_s)\Big)dt,
	\end{multline}
	 where we have used again almost $g$-compatibility of the Chern connection and the fact that $C_{\dot\gamma_s}(D_{\gamma_s}^{\dot\gamma_s}\dot\gamma_s,\dot\beta_t,\lambda(\dot\gamma_s)\dot\gamma_s)=0$ by Prop. \ref{Cartanprop}. 
Computing the last terms of \eqref{conffirstvar2} %and  \eqref{secondpoint} 
in $s=0$,  substituting in \eqref{conffirstvar},  and taking into account that $g_{\dot\gamma_s}(\dot\gamma_s,\dot\gamma_s)=L(\dot\gamma_s)=0$ when $s=0$,  we get (\ref{firstvariation2}).
\end{proof}
 Now observe that given an $A$-admissible curve $\gamma:[a,b]\rightarrow M$, and an arbitrary smooth vector field $W$ along $\gamma$, there always exists a (non-unique) $A$-admissible variation $\Lambda$ of $\gamma$ with $W$ as variational vector field. In fact, it is well-known that we can choose a 
variation $\Lambda:[a,b]\times (-\varepsilon,\varepsilon)\rightarrow M$ of $\gamma$ having $W$ as a variation vector field. As $\Lambda$ is at least $C^1$, being $A$ an open subset  and  $[a,b]$ compact, we can choose a smaller $\varepsilon$ if necessary in such a way that $\Lambda$ is $A$-admissible. Taking into account this fact, 
the last proposition allows us to obtain the geodesic equation for $\lambda L$.
\begin{prop}
The  lightlike  geodesics of the pseudo-Finsler manifold $(M,\lambda L)$ are the smooth lightlike curves which satisfy 
\begin{equation}\label{lightgeo}
  D_\gamma^{\dot\gamma}(\lambda(\dot\gamma)\dot\gamma) = 0.
\end{equation}
%\begin{equation}\label{geoconf}
%\bv \frac 12 L(\dot\gamma)(\nabla^h\lambda)_{\dot\gamma}
%- D_\gamma^{\dot\gamma}\big( \lambda(\dot\gamma)\dot\gamma +\frac{1}{2} L(\dot\gamma)(\nabla^\nu \lambda)_{\dot\gamma}\big)=0 \ev
%\end{equation}
\end{prop}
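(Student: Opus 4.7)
The plan is to characterize lightlike geodesics of $(M,\lambda L)$ as critical points of the energy functional $E_\lambda$ with fixed endpoints, and then read off the Euler--Lagrange equation from the first variation formula \eqref{firstvariation2}. Concretely, a standard argument applied to the pseudo-Finsler manifold $(M,\lambda L)$ gives that geodesics are exactly the $A$-admissible smooth curves that are critical points of $E_\lambda$ among $A$-admissible variations with fixed endpoints. By Remark \ref{lightlike} applied to $(M,\lambda L)$, any geodesic $\gamma$ of $(M,\lambda L)$ with $\lambda(\dot\gamma(t_0))L(\dot\gamma(t_0))=0$ for some $t_0$ satisfies $\lambda(\dot\gamma)L(\dot\gamma)\equiv 0$, hence (since $\lambda>0$) $L(\dot\gamma)\equiv 0$. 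So lightlike geodesics of $(M,\lambda L)$ coincide with smooth lightlike critical points of $E_\lambda$.

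Next, I would apply Prop.~\ref{firstvar2} to a lightlike critical point $\gamma$. For any $A$-admissible variation with variational field $W$ satisfying $W(a)=W(b)=0$, the boundary term in \eqref{firstvariation2} vanishes, giving
\begin{equation*}
0=E_\lambda'(0)=\int_a^b g_{\dot\gamma}\!\left(W,\,-D_\gamma^{\dot\gamma}\bigl(\lambda(\dot\gamma)\dot\gamma\bigr)\right)dt.
\end{equation*}
As noted in the text just before the statement, given any smooth vector field $W$ along $\gamma$ vanishing at the endpoints, one can construct an $A$-admissible variation realizing $W$ (using openness of $A$ and compactness of $[a,b]$ to shrink $\varepsilon$). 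Thus the integral vanishes for every such $W$.

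The final step is the fundamental lemma of the calculus of variations applied in this anisotropic setting: since $g_{\dot\gamma(t)}$ is nondegenerate for each $t$, a bump-function argument (multiply a test vector by a nonnegative scalar bump supported near a point where $D_\gamma^{\dot\gamma}(\lambda(\dot\gamma)\dot\gamma)$ is hypothetically nonzero) forces $D_\gamma^{\dot\gamma}(\lambda(\dot\gamma)\dot\gamma)=0$ identically, which is \eqref{lightgeo}. Conversely, if a smooth lightlike curve satisfies \eqref{lightgeo}, the same first variation formula shows $E_\lambda'(0)=0$ for every fixed-endpoint admissible variation, so $\gamma$ is a lightlike geodesic of $(M,\lambda L)$.

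The main obstacle, and really the whole content, is establishing enough admissible test variations to invoke nondegeneracy of $g_{\dot\gamma}$ pointwise; this is the reason the openness of $A$ was emphasized right before the statement. Everything else is a direct specialization of the first variation formula already proved.
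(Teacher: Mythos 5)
Your proposal is correct and follows essentially the same route as the paper: both characterize lightlike geodesics of $(M,\lambda L)$ as fixed-endpoint critical points of $E_\lambda$ (the paper citing \cite[Cor.~3.7]{JavSoa15} for this), then read off \eqref{lightgeo} from the first variation formula of Prop.~\ref{firstvar2} via the existence of admissible variations and nondegeneracy of $g_{\dot\gamma}$. You merely spell out the fundamental-lemma step that the paper delegates to the argument of \cite[Cor.~3.7]{JavSoa15}.
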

\begin{proof}
 Along the proof, we consider curves with fixed endpoints and smooth $A$-admissible variations.  It is well-known that geodesics are critical points of the energy functional (see for example
 \cite[Cor. 3.7]{JavSoa15}).  Observe that, since  we consider smooth variations, we do not need the
 injectivity of Legendre transform. From Prop. \ref{firstvar2}, we can prove that the critical points of $E_\lambda$  are given by \eqref{lightgeo} analogously to the proof of \cite[Cor. 3.7]{JavSoa15}.
\end{proof}
%This allows us to conclude that a lightlike curve $\gamma$ is a geodesic of 
%$\lambda L$ if and only if
%\begin{equation}\label{lightgeo}
%  D_\gamma^{\dot\gamma}(\lambda(\dot\gamma)\dot\gamma) = 0,
%\end{equation}
%which implies that lightlike geodesics are preserved by conformal transformations up to reparametrization.
\begin{rem}\label{reparame}
 Let us observe first that given a curve $\gamma:[a,b]\rightarrow M$ and  a reparametrization $\tilde{\gamma}=\gamma\circ\varphi$ with $\varphi:[\tilde{a},\tilde{b}]\rightarrow [a,b]$  and $\dot\varphi > 0$ on $[\tilde a,\tilde b]$, if $V$ and $W$ are vector fields along $\gamma$ and $\tilde{V}$ and $\tilde{W}$ are the vector fields along $\tilde{\gamma}$ defined as $\tilde{V}(\mu)=V(\varphi(\mu))$ and $\tilde{W}(\mu)=W(\varphi(\mu))$ for any $\mu\in [\tilde{a},\tilde{b}]$, then
\begin{equation*}
D_{\tilde{\gamma}}^{\tilde{V}}\tilde{W}(\mu)=\dot\varphi(\mu)D_\gamma^VW(\varphi(\mu)),
\end{equation*}
for any $\mu\in[\tilde{a},\tilde{b}]$, and $D_\gamma^VW=D_\gamma^{\phi  V}W$ for any function  $\phi :[a,b]\rightarrow (0,+\infty)$ (recall \S \ref{chernsection}).
\end{rem}
\begin{prop}\label{georepara}
 If  the  curve $\gamma:[a,b]\rightarrow M$ is a lightlike geodesic of $(M,\lambda L)$, then $\tilde \gamma=\gamma\circ \varphi$ is a lightlike geodesic of $(M,L)$, where $\varphi:[\tilde{a},\tilde{b}]\rightarrow [a,b]$ is a
solution of the differential equation
\begin{equation}\label{repar}
 \dot\varphi (\mu) = \lambda(\dot\gamma (\varphi (\mu))).
\end{equation}
 \end{prop}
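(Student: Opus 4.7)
The plan is to combine the reparametrization formula recorded in Remark~\ref{reparame} with the degree-zero homogeneity of the Chern connection and the geodesic equation \eqref{lightgeo}. First, I would use the defining ODE \eqref{repar} to compute the velocity of $\tilde\gamma=\gamma\circ\varphi$: since
\[
\dot{\tilde\gamma}(\mu)=\dot\varphi(\mu)\,\dot\gamma(\varphi(\mu))=\lambda\bigl(\dot\gamma(\varphi(\mu))\bigr)\,\dot\gamma(\varphi(\mu)),
\]
we recognize that $\dot{\tilde\gamma}$ is precisely the reparametrization (in the sense of Remark~\ref{reparame}) of the vector field $V:=\lambda(\dot\gamma)\,\dot\gamma$ along $\gamma$.

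Second, applying the reparametrization formula from Remark~\ref{reparame} with $\tilde V=\tilde W=\dot{\tilde\gamma}$ gives
\[
D_{\tilde\gamma}^{\dot{\tilde\gamma}}\dot{\tilde\gamma}(\mu)=\dot\varphi(\mu)\, D_\gamma^{\lambda(\dot\gamma)\dot\gamma}\bigl(\lambda(\dot\gamma)\,\dot\gamma\bigr)(\varphi(\mu)).
\]
Since $\lambda(\dot\gamma)>0$, the second identity of Remark~\ref{reparame}, namely $D_\gamma^{\phi V}=D_\gamma^V$ for positive $\phi$, allows me to replace the reference vector field in the first slot by $\dot\gamma$. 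Then the hypothesis that $\gamma$ is a lightlike geodesic of $(M,\lambda L)$ combined with \eqref{lightgeo} forces the right-hand side to vanish, yielding $D_{\tilde\gamma}^{\dot{\tilde\gamma}}\dot{\tilde\gamma}=0$.

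Third, to verify that $\tilde\gamma$ is also lightlike with respect to $L$, I would invoke the $2$-homogeneity of $L$:
\[
L(\dot{\tilde\gamma}(\mu))=\dot\varphi(\mu)^{2}\,L(\dot\gamma(\varphi(\mu)))=0,
\]
since $L(\dot\gamma)=0$ (the conditions $L(\dot\gamma)=0$ and $\lambda(\dot\gamma) L(\dot\gamma)=0$ are equivalent because $\lambda>0$).

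I do not foresee a real obstacle here: the differential equation \eqref{repar} is tailored precisely so that $\dot{\tilde\gamma}$ becomes the pullback of $\lambda(\dot\gamma)\,\dot\gamma$ along $\varphi$, at which point the computation collapses via two applications of Remark~\ref{reparame}. The main conceptual point is to notice this tailoring and to resist the temptation of computing with the Chern connection of $\lambda L$; everything else is a direct unwinding of definitions.
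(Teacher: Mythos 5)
Your proposal is correct and follows essentially the same route as the paper: identify $\dot{\tilde\gamma}$ with the pullback of $\lambda(\dot\gamma)\dot\gamma$ via \eqref{repar}, apply the reparametrization identity and the $0$-homogeneity of the reference vector from Remark~\ref{reparame}, and conclude with \eqref{lightgeo}; the lightlike character follows from $2$-homogeneity exactly as you say. The only difference is the order in which the two identities of Remark~\ref{reparame} are applied, which is immaterial.
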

 \begin{proof}
 %An easy consequence of \eqref{lightgeo} and Remark \ref{reparame} as applying the 
  Observe that  as $\dot{\tilde{\gamma}}(\mu)=\dot\varphi(\mu)\dot\gamma(\varphi(\mu))$,  by applying Remark \ref{reparame}, one gets
\begin{multline*}
D_{\tilde \gamma}^{\dot{\tilde\gamma}}\dot{\tilde\gamma} \left(\mu\right)  = 
 D_{\tilde \gamma}^{ \dot\varphi (\dot{\gamma}\circ\varphi)}\left( \dot\varphi (\dot{\gamma}\circ\varphi)\right)  \left(\mu\right)  \\= 
 D_{\tilde \gamma}^{\dot{\gamma}\circ\varphi}\left( \dot\varphi(\dot{\gamma}\circ\varphi)\right)  \left(\mu\right)  = 
 \dot\varphi(\mu) D_{ \gamma}^{\dot{\gamma}}
 \left(  (\dot\varphi\circ\varphi^{-1}) \dot{\gamma}  \right)  \left(\varphi\left(\mu\right)\right), 
\end{multline*}
 so the conclusion follows from \eqref{lightgeo}. %\footnote{\bv Aqui nao sei se precisa usar a notaçao $\frac{d\varphi}{d\mu}$ pois $\varphi$ e uma funçao real, e fica bem claro o que e $\dot\varphi$. \ev} 
 \end{proof}
%% From now on, given a curve $\gamma:[a,b]\rightarrow M$ of a pseudo-Finsler manifold and a vector field along $\gamma$, $W\in {\mathfrak X}(\gamma)$, we will denote $W'=D^{\dot\gamma}_\gamma W$. 
\subsection{Second variation of the energy}
Our next goal is to study the behavior of conjugate and focal points of lightlike geodesics 
under conformal transformations. We start by computing the second variation of the energy functional $E_\lambda$.

%\br Let us recall some definitions. If \er $\Lambda$ is 
%desic} \eb (see \cite[Th. 1.1]{J14}). 
%Moreover, following \cite{J13}, we can define 
%\begin{equation}
%\label{jacobiop}
%R^\gamma(\dot\gamma,W)Z:=R^\Lambda(\tilde{Z}),
%\end{equation}
%where $Z$ is a smooth vector field along $\gamma$ and $\tilde{Z}$ is a smooth extension of $Z$ to $\Lambda$. As was proven in \cite{J13}, the operator $R^\gamma$ is well-defined because it does not depend on the choice of the variation $\Lambda$ neither on the extension of $Z$.
\begin{prop}\label{secondvar2}
Let $\gamma:[a,b]\rightarrow M$ be a lightlike geodesic of $(M,\lambda L)$ and 
consider an $A$-admissible smooth variation $\Lambda$. Then, with the above 
notation,
\begin{multline}\label{confsecondvar}
E_{\lambda}''(0)=\frac{d^2}{ds^2}E_{\lambda}(\gamma_s)\left|_{s=0}\right.\\
= \int_a^b \lambda(\dot\gamma)
\Big(-g_{\dot\gamma}\left(R_{\dot\gamma}\left(\dot\gamma,W\right)W,\dot\gamma\right)+
g_{\dot\gamma}\left(W',W'\right)\Big)dt \\
 + 2\int_a^b g_{\dot\gamma}\left(W',\dot\gamma \right) (g_{\dot\gamma}\left(W,(\nabla^h\lambda)_{\dot\gamma}\right)+g_{\dot\gamma}\left(W',(\nabla^{\nu}\lambda)_{\dot\gamma}\right)) dt
\\+ \left[\lambda(\dot\gamma)g_{\dot\gamma}(D^{\dot\gamma_s}_{\beta_t}\dot\beta_t|_{s=0}, \dot\gamma)\right]_a^b,
\end{multline}
where $D^{\dot\gamma}_{\beta_t}\dot\beta_t|_{s=0}$ is the transverse acceleration 
vector field of the variation, $R$ is the  Chern curvature of $L$ defined in \eqref{RvRV}  and $\, '$ denotes the covariant derivative, namely, $W'=D^{\dot\gamma}_\gamma W$. 
\end{prop}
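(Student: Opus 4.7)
The natural strategy is direct differentiation. Write $E_\lambda(\gamma_s)=\tfrac12\int_a^b \lambda(\dot\gamma_s)L(\dot\gamma_s)\,dt$ and expand $(\lambda L)''=\lambda''L+2\lambda'L'+\lambda L''$; at $s=0$ the first term vanishes because $L(\dot\gamma)\equiv 0$. Thus
\[
E_\lambda''(0)=\int_a^b\!\Big[\tfrac{d\lambda(\dot\gamma_s)}{ds}\big|_{0}\cdot\tfrac{dL(\dot\gamma_s)}{ds}\big|_{0}+\tfrac{\lambda(\dot\gamma)}{2}\tfrac{d^2L(\dot\gamma_s)}{ds^2}\big|_{0}\Big]\,dt.
\]

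For the first-order pieces: equation \eqref{basic} (which is an identity, not restricted to lightlike curves) gives $\tfrac12 L'(s)=g_{\dot\gamma_s}(D^{\dot\gamma_s}_\gamma\dot\beta_t,\dot\gamma_s)$, so $\tfrac12 L'(0)=g_{\dot\gamma}(W',\dot\gamma)$ after using \eqref{commut}. To compute $\lambda'(0)$, observe that the map $s\mapsto\dot\gamma_s(t)\in A$ lifts $\beta_t$; its horizontal velocity is $W$ while its vertical velocity is $D^{\dot\gamma_s}_\beta\dot\gamma_s|_{s=0}=D^{\dot\gamma}_\gamma W=W'$ by \eqref{commut}. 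Applying \eqref{derivf}, \eqref{nablanu} and \eqref{nablah} then yields
\[
\tfrac{d\lambda(\dot\gamma_s)}{ds}\big|_0=g_{\dot\gamma}\big((\nabla^h\lambda)_{\dot\gamma},W\big)+g_{\dot\gamma}\big((\nabla^\nu\lambda)_{\dot\gamma},W'\big),
\]
whose product with $L'(0)=2g_{\dot\gamma}(W',\dot\gamma)$ is exactly the second integrand of \eqref{confsecondvar}. For $L''(0)$, differentiate $\tfrac12 L'(s)=g_{\dot\gamma_s}(D^{\dot\gamma_s}_\gamma\dot\beta_t,\dot\gamma_s)$ once more in $s$ via almost $g$-compatibility along $\beta_t$; the Cartan correction has a slot equal to $\dot\gamma_s$ and vanishes by Prop.~\ref{Cartanprop}, and \eqref{commut} rewrites one factor as $W'$, delivering
\[
\tfrac12 L''(0)=g_{\dot\gamma}\bigl(D^{\dot\gamma_s}_\beta D^{\dot\gamma_s}_\gamma\dot\beta_t|_{s=0},\dot\gamma\bigr)+g_{\dot\gamma}(W',W').
\]

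To produce the curvature term and the boundary term I would invoke the two-parameter surface analogue of \eqref{RvRV}: with $V=\dot\gamma_s$, the commutator of Chern covariant derivatives can be written
\[
D^V_\beta D^V_\gamma Z - D^V_\gamma D^V_\beta Z = R_V(\dot\beta_t,\dot\gamma_s)Z + P_V(\dot\gamma_s,Z,D^V_\beta V) - P_V(\dot\beta_t,Z,D^V_\gamma V),
\]
obtained by combining the standard affine-curvature formula for a fixed admissible extension $\tilde V$ of $\dot\gamma$ with \eqref{RvRV} and absorbing the discrepancy between $D^V$ and $D^{\tilde V}$ into the $P$ contributions. At $s=0$ with $Z=\dot\beta_t$, the $(\lambda L)$-geodesic equation \eqref{lightgeo} forces $D^{\dot\gamma}_\gamma\dot\gamma$ to be proportional to $\dot\gamma$, killing the $P$-term whose third slot is $D^V_\gamma V$ via the homogeneity identity $P_v(\cdot,\cdot,v)=0$ (a direct consequence of degree-zero homogeneity of $\nabla$). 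Antisymmetry $R_v(W,\dot\gamma)W=-R_v(\dot\gamma,W)W$ then produces the curvature term with the sign stated in \eqref{confsecondvar}. The residual $D^{\dot\gamma}_\gamma D^{\dot\gamma_s}_\beta\dot\beta_t|_{s=0}$ is paired with $\lambda(\dot\gamma)$ and integrated by parts by almost $g$-compatibility along $\gamma$: the Cartan correction vanishes again, and the product-rule term $(\lambda(\dot\gamma))'\,g_{\dot\gamma}(D^{\dot\gamma_s}_\beta\dot\beta_t|_{s=0},\dot\gamma)$ cancels precisely against $\lambda(\dot\gamma)\,g_{\dot\gamma}(D^{\dot\gamma_s}_\beta\dot\beta_t|_{s=0},D^{\dot\gamma}_\gamma\dot\gamma)$ thanks once more to \eqref{lightgeo}, leaving the boundary term $\bigl[\lambda(\dot\gamma)\,g_{\dot\gamma}(D^{\dot\gamma_s}_\beta\dot\beta_t|_{s=0},\dot\gamma)\bigr]_a^b$.

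\textbf{Main obstacle:} The most delicate step is controlling the surviving $P$-contribution $g_{\dot\gamma}(P_{\dot\gamma}(\dot\gamma,W,W'),\dot\gamma)$ produced when translating the surface commutator into the anisotropic curvature $R_{\dot\gamma}$; it must be shown to vanish in order to recover the stated formula. I would attack this by vertically differentiating almost $g$-compatibility of the Chern connection (with third slot fixed equal to the reference vector, where the Cartan term drops by Prop.~\ref{Cartanprop}) and combining the resulting identity with the homogeneity relations satisfied by the geodesic spray. Phrasing the surface commutator and verifying this last cancellation cleanly within the anisotropic calculus of \cite{J16,J19}, while keeping every computation referred to the Chern connection of $L$ rather than that of $\lambda L$, is the technically most subtle point.
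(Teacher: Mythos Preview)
Your proposal is correct and follows essentially the same route as the paper's proof: both expand $(\lambda L)''$ by the product rule, compute $\lambda'(0)$ via \eqref{derivf}--\eqref{nablah} (the paper's \eqref{derlambda}), differentiate \eqref{basic} once more for $L''(0)$, commute covariant derivatives to produce the curvature, and integrate by parts against $\lambda(\dot\gamma)\dot\gamma$ using \eqref{lightgeo} for the boundary term. The delicate $P$-contribution you isolate is exactly what the paper handles by appealing to \cite[Prop.~3.7 and~3.8]{J19} together with $P_v(u,w,v)=0$; the required vanishing $g_{\dot\gamma}(P_{\dot\gamma}(\dot\gamma,W,W'),\dot\gamma)=0$ is the Landsberg-type identity that your vertical differentiation of almost $g$-compatibility would also establish.
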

\begin{proof}
Using   \eqref{conffirstvar} and taking into account that $\gamma_s$ is lightlike for $s=0$,     we get
\begin{multline*}%\label{confsecvar}
\frac{d^2}{ds^2}E_{\lambda}(\gamma_s)|_{s=0}=
\int_a^b\left. \frac{d}{ds}\right|_{s=0}\lambda(\dot\gamma_s) \left.\frac{d}{ds}\right|_{s=0}g_{\dot\gamma_s} \left( \dot\gamma_s,\dot\gamma_s\right)dt
\\+\frac 12 \int_a^b \left.\lambda(\dot\gamma) \frac{d^2}{ds^2}\right|_{s=0}g_{\dot\gamma_s} \left( \dot\gamma_s,\dot\gamma_s\right)dt.
%= \int_a^b \frac{d}{ds}g_{\dot\gamma_s} \left(\dot\beta_t ,\frac 12 L\left(\dot\gamma_s\right)\left(\nabla^{\dot\gamma_s}\lambda\right)\right)dt \\+ 
%\int_a^b \frac{d}{ds} g_{\dot\gamma_s}\left(D_{\gamma_s}^{\dot\gamma_s}\dot\beta_t , \lambda\left(\gamma_s\right)\dot\gamma_s\right)dt.
\end{multline*}
 Moreover, using \eqref{basic} and \eqref{conffirstvar2}, the above expression becomes 
\begin{multline}\label{confsecvar}
\frac{d^2}{ds^2}E_{\lambda}(\gamma_s)|_{s=0}=
\int_a^b\left. \frac{d}{ds}\right|_{s=0}\lambda(\dot\gamma_s) \left.\frac{d}{ds}\right|_{s=0}g_{\dot\gamma_s} \left( \dot\gamma_s,\dot\gamma_s\right)dt
\\+\int_a^b \lambda\left(\dot\gamma\right)\, \left.\frac{d}{ds}\right|_{s=0}\left( g_{\dot\gamma_s}\left(D_{\gamma_s}^{\dot\gamma_s}\dot\beta_t , \dot\gamma_s\right)\right)dt
\end{multline}
%The first term on the right-hand side of the above equation is equal to
%\begin{multline}\label{confsecvar1}
% \int_a^b \Big(\frac{d}{ds}L(\dot\gamma_s) \Big) g_{\dot\gamma_s} \Big(\dot\beta_t ,\frac 12(\nabla^{\dot\gamma_s}\lambda)\Big)dt
%+ \int_a^b L(\dot\gamma_s) \frac{d}{ds} \bigg( g_{\dot\gamma_s} \Big(\dot\beta_t ,\frac 12(\nabla^{\dot\gamma_s}\lambda)\Big) \bigg)dt\\
%=\int_a^b\left( g_{\dot\gamma_s}(D_{\gamma_s}^{\dot\gamma_s} \dot\beta_t,\dot\gamma_s) g_{\dot\gamma_s} (\dot\beta_t , \nabla^{\dot\gamma_s}\lambda)\right)dt
%+ \int_a^b L(\dot\gamma_s) \frac{d}{ds} \bigg( g_{\dot\gamma_s} \Big(\dot\beta_t ,\frac 12(\nabla^{\dot\gamma_s}\lambda)\Big) \bigg)dt,
%\end{multline}
%where we have used  that
%$ \frac{d}{ds}L(\dot\gamma_s) = 2g_{\dot\gamma_s}(D_{\gamma_s}^{\dot\gamma_s} \dot\beta_t,\dot\gamma_s)~$
%(see \eqref{basic}).
%As for the second term on the right-hand side of \eqref{confsecvar}, it equals
%\begin{multline}\label{confsecvar2}
%\int_a^b \frac{d}{ds}(\lambda\left(\gamma_s\right))\, g_{\dot\gamma_s}\left(D_{\gamma_s}^{\dot\gamma_s}\dot\beta_t , \dot\gamma_s\right)dt
%+ \int_a^b \lambda\left(\gamma_s\right)\, \frac{d}{ds}\left( g_{\dot\gamma_s}\left(D_{\gamma_s}^{\dot\gamma_s}\dot\beta_t , \dot\gamma_s\right)\right)dt\\
%= \int_a^b \left(g_{\dot\gamma_s}(D_{\gamma_s}^{\dot\gamma_s} \dot\beta_t,\dot\gamma_s) \,g_{\dot\gamma_s} (\dot\beta_t , \nabla^{\dot\gamma_s}\lambda)\right)dt
%+ \int_a^b \lambda\left(\gamma_s\right) \frac{d}{ds}\left( g_{\dot\gamma_s}\left(D_{\gamma_s}^{\dot\gamma_s}\dot\beta_t , \dot\gamma_s\right)\right)dt,
%\end{multline}
and using the same arguments as in the proof of \cite[Prop. 3.2]{JavSoa15}, and taking into account the observations in the proof of \cite[Prop. 3.7]{J19}, we get that the last term above equals
\begin{equation}\label{confsecvar3}
\int_a^b\lambda(\dot\gamma)\left(\left.g_{\dot\gamma_s}\left(D_{\gamma_s}^{\dot\gamma_s}D_{\beta_t}^{\dot\gamma_s} \dot\beta_t
- R_{\dot\gamma}(\dot\gamma_s,\dot\beta_t)\dot\beta_t ,\dot\gamma_s\right)\right|_{s=0} + 
\left.g_{\dot\gamma_s}\left(D_{\gamma_s}^{\dot\gamma_s} \dot\beta_t ,D_{\gamma_s}^{\dot\gamma_s}\dot\beta_t\right)\right|_{s=0} \right)dt.
\end{equation}
Observe that, as $\gamma$ is not a geodesic of $(M,L)$, but a pregeodesic satisfying \eqref{lightgeo}, in order to get 
\[g_{\dot\gamma}(R_{\dot\gamma}(\dot\gamma,W)W,\dot\gamma)=g_{\dot\gamma}(  R^\gamma(\dot\gamma,W)W,\dot\gamma),\]
as in the proof of \cite[Prop. 3.8]{J19}, one must use that,  by  \eqref{lightgeo}, $D_{\gamma}^{\dot\gamma}\dot\gamma=-\frac{d}{dt}(\lambda(\dot\gamma))\dot\gamma$, and  for any $v\in A$ and $u,w\in T_{\pi(v)}M$, $P_v(u,w,v)=0$ by homogeneity (see \cite[Eq. (1)]{J14}).
 Finally, to compute the first term in \eqref{confsecvar},  we use   \eqref{derivf}  to obtain
%\bb = \frac 12 \int_a^b %d\lambda_{\gamma_s}
%\left( d(\lambda \circ Y)_{\gamma_s}(\dot\beta_t) - \ell^{\text{v}}_{\dot\gamma_s}( D^{\dot\gamma_s}_{\beta_t}\dot\gamma_s) \right)
%g_{\dot\gamma_s}\left(\dot\gamma_s,\dot\gamma_s\right)dt \eb
%\\
%\\=\int_a^b g_{\dot\gamma_s} \Big(\dot\beta_t ,\frac 12 L(\dot\gamma_s)(\grad{\dot\gamma_s}\lambda)\Big)dt.
\begin{multline}\label{derlambda}
\frac{d}{ds}\left(\lambda(\dot\gamma_s)\right)=\nabla_{\dot \beta_t}\lambda (\dot\gamma_s)+
(\partial^\nu\lambda)_{\dot\gamma_s}(D^{\dot\gamma_s}_{\beta_t}\dot\gamma_s)
=g_{\dot\gamma_s}((\nabla^h\lambda)_{\dot\gamma_s},\dot\beta_t)+g_{\dot\gamma_s}((\nabla^\nu\lambda)_{\dot\gamma_s},D^{\dot\gamma_s}_{\gamma_s}\dot\beta_t),
\end{multline}
where we have used the definition of the vertical and horizontal gradients in \eqref{nablanu} and \eqref{nablah}  and then \eqref{commut}. 

 Substituting %\eqref{confsecvar1}, \eqref{confsecvar2} 
\eqref{basic}, \eqref{confsecvar3} and  \eqref{derlambda}   in \eqref{confsecvar}, putting $s=0$,  %observing that, since $\gamma$ is a lightlike curve, 
%the second term in \eqref{confsecvar1} must equal $0$, 
and using that \[\lambda(\dot\gamma)g_{\dot\gamma}(\left.D_{\gamma_s}^{\dot\gamma_s}D_{\beta_t}^{\dot\gamma_s} \dot\beta_t\right|_{s=0},
\dot\gamma)=\frac{d}{dt}(g_{\dot\gamma}(\left.D_{\beta_t}^{\dot\gamma_s} \dot\beta_t\right|_{s=0},
\lambda(\dot\gamma)\dot\gamma))\]
 for $s=0$ (observe that $\gamma_0=\gamma$ satisfies \eqref{lightgeo}), we get \eqref{confsecondvar}.
\end{proof}

\subsection{$(P,Q)$-Jacobi fields and $P$-focal points}

Recalling  now the notation of \cite[\S 3.2]{JavSoa15}, 
consider the space of curves \[C_L(P,Q)\subset C_L(M,[a,b])\] joining two submanifolds $P$ and $Q$ of $M$, namely,
\[C_L(P,Q):=\{\gamma\in C_L(M,[a,b]): \gamma(a)\in P,\gamma(b)\in Q\}.\]
 When we consider a  smooth $(P,Q)$-variation of $\gamma\in C_L(P,Q)$ 
by curves in $C_L(P,Q)$, the variational vector field is tangent to $P$ and 
$Q$ at the endpoints. Indeed, we define 
 \[T_\gamma C_L(P,Q)=\{W\in T_{\gamma}C_L(M,[a,b]): W(a)\in 
T_{\gamma(a)}P,W(b)\in T_{\gamma(b)}Q\}.\]

%\br We refer the reader to \cite{oneill} for the basic notions and notation on submanifolds in semi-Riemannian manifolds. \er

 We  denote the tangent bundle of $P$ as $TP$ and define the {\it normal bundle} $TP^\perp$ of  $P$ as the  set of  vectors $v\in A$ such that  $\pi(v)\in P$ and $g_v(v,w)=0$ for every $w\in T_{\pi(v)}P$. We denote  by $\mathcal{F}(P)$ the space of smooth real functions on $P$,  by  $\mathfrak{X}(P)$ the $\mathcal{F}(P)$-module  space of 
smooth sections of the fiber bundle $TP$ over $P$ and by $\mathfrak{X}(P)^\perp$
the space of  smooth sections of $\pi:TP^\perp\rightarrow P_0$,  where $P_0=\pi(TP^\perp)$ (observe that as $L$ is defined in a conic open subset, the intersection $T_pP\cap TP^\perp$ can be empty for some $p\in P$).  %% and by ${\mathcal F}(P)$ the subset of smooth real functions on $P$. 
 Given $N\in \mathfrak{X}(P)^\perp$, we denote by $\mathfrak{X}(P)^\perp_N$ 
the subset of smooth sections $W$ of  $\pi:i^*(TM)\rightarrow P$ (where $i^*(TM)$  is the pull-back of $TM$ along the inclusion $i:P\rightarrow M$) 
such that, for every $p\in P$, $W_p$ is $g_{N_p}$-orthogonal to $T_pP$.
\begin{defi}\label{secondfundamental}
Fix $N\in \mathfrak{X}(P)^\perp$ and suppose that $g_{N_p}|_{T_pP\times T_pP}$ is nondegenerate for every $p\in P$. Then the {\it second fundamental form} of $P$ in the direction of $N$ 
 is the map $S^P_N:\mathfrak{X}(P)\times \mathfrak{X}(P)\rightarrow  
\mathfrak{X}(P)^\perp_N$ given by $S_N^P(U,W)={\rm nor}_N \nabla^N_UW$.  Moreover, we define the {\it  normal second fundamental form }  ${\mathcal S}^P_N:\mathfrak{X}(P)\times \mathfrak{X}(P)\rightarrow  
\mathfrak{X}(P)$ as ${\mathcal S}^P_N(U)={\rm tan}_N\nabla^V_UN$.   Here, ${\rm tan}_N$ and ${\rm nor}_N$ compute the tangent and the normal part to $P$, respectively,  using $g_N$. 
\end{defi} 
Observe that the notation for the normal second fundamental form is different from \cite{JavSoa15}. 
When $\gamma\in C_L(P,Q)=C_{\lambda L}(P,Q)$ is a geodesic of $(M,\lambda L)$ which is $g_{\dot\gamma}$-orthogonal to $P$ and $Q$ at the endpoints  and such that $g_{\dot\gamma(a)}|_{P\times P}$ and $g_{\dot\gamma(b)}|_{Q\times Q}$ are nondegenerate, Prop.
\ref{secondvar2} allows us to compute the index form of $\gamma$ as
\begin{multline}\label{formaindice}
I^{\gamma,\lambda}_{P,Q}(V,W)=\int_a^b \lambda(\dot\gamma)\left(-g_{\dot\gamma}(R_{\dot\gamma}(\dot\gamma,V)W,\dot\gamma)+g_{\dot\gamma}(V',W')\right) dt\\
+\int_a^b \left(g_{\dot\gamma}(V',\dot\gamma) g_{\dot\gamma}(W,(\nabla^{\h}\lambda)_{\dot\gamma})+g_{\dot\gamma}(W',\dot\gamma) g_{\dot\gamma}(V,(\nabla^{\h}\lambda)_{\dot\gamma})\right)  dt \\
 +\int_a^b \left(g_{\dot\gamma}(V',\dot\gamma) g_{\dot\gamma}(W',(\nabla^{\nu}\lambda)_{\dot\gamma})+g_{\dot\gamma}(W',\dot\gamma) g_{\dot\gamma}(V',(\nabla^{\nu}\lambda)_{\dot\gamma})\right)  dt  \\
+\lambda(\dot\gamma(b))g_{\dot\gamma(b)}(S^Q_{\dot\gamma(b)}(V,W),\dot\gamma(b))
-\lambda(\dot\gamma(a))g_{\dot\gamma(a)}(S^P_{\dot\gamma(a)}(V,W),\dot\gamma(a)),
\end{multline}
where $V,W\in T_\gamma C_L(P,Q)$,  and $S^P$ and $S^Q$  are  the fundamental forms of $P$ and $Q$ computed with $L$. This comes easily from Prop. \ref{secondvar2}, the equality $E''_\lambda(0)=I^{\gamma,\lambda}_{P,Q}(W,W)$ and the definition of second fundamental form,  taking into account that $g_{\dot\gamma}(R_{\dot\gamma}(\dot\gamma,V)W,\dot\gamma)$ is symmetric in $V$ and $W$, which follows from \cite[Prop. 3.1]{J19}.  

 Recall that given a geodesic $\tilde \gamma:[\tilde a,\tilde b]\rightarrow M$ of a pseudo-Finsler manifold $(M,L)$, we say that a vector field $\tilde J$ along $\tilde \gamma$ is a {\it Jacobi field} if
\begin{equation}\label{jaco}
D_{\tilde{\gamma}}^{\dot{\tilde{\gamma}}}
D_{\tilde{\gamma}}^{\dot{\tilde{\gamma}}}
\tilde{J}= R_{\dot{\tilde{\gamma}}}(\dot {\tilde{\gamma}},\tilde{J})\dot{\tilde{\gamma}},
\end{equation}
where $D_{\tilde \gamma}$ is the covariant derivative associated with the Chern connection and $R$ its  curvature tensor
 (see \cite[Prop. 2.11 and Lemma 3.5]{J19}), 
and we say that it is a {\it $(P,Q)$-Jacobi field} if $\tilde J(\tilde a)$ and $\tilde J(\tilde b)$ are tangent to $P$ and $Q$, respectively, and
\begin{equation}\label{condini}
{\rm tan}_{\dot{\tilde{\gamma}}(a)} (D_{\tilde{\gamma}}^{\dot{\tilde{\gamma}}(a)}\tilde{J}(\tilde{a}))={\mathcal S}^P_{\dot{\tilde{\gamma}}(a)}(\tilde{J}(\tilde{a})),\quad {\rm tan}_{\dot{\tilde{\gamma}}(b)} (D_{\tilde{\gamma}}^{\dot{\tilde{\gamma}}}\tilde{J}(\tilde{b}))={\mathcal S}^Q_{\dot{\tilde{\gamma}}(b)}(\tilde{J}(\tilde{b})),
\end{equation}
where ${\mathcal S}^P$, ${\mathcal S}^Q$ are  the normal second fundamental forms of $P$, $Q$, respectively.
% \bm (see \cite[Prop. 3.11]{JavSoa15}). \em 
Moreover, we say that $\tilde J$ is a {\it $P$-Jacobi field} if it only satisfies the first identity of \eqref{condini}. Finally, we say that $t_0\in (\tilde a,\tilde b]$ is a {\it $P$-focal point} of $\tilde \gamma$ if there exists a $P$-Jacobi field $\tilde J$ along $\tilde \gamma$ such that  $\tilde J(t_0)=0$. The existence of a $P$-focal point $t_0\in(\tilde a,\tilde b]$ is equivalent to the existence of a $(P,\tilde\gamma(t_0))$-Jacobi field. 
%In the following, the elements on the kernel of $I^{\gamma}_{P,Q}$ (see \cite[Proposition 3.11]{JavSoa15}) will be called $(P,Q)$-Jacobi fields. 

Our next goal is to show that the $P$-focal points of $\gamma$ are preserved with multiplicity in the curve $\tilde{\gamma} =\gamma\circ\varphi$ obtained in Prop. \ref{georepara} (recall the notation of Remark \ref{reparame}).
\begin{lemma}\label{PQJacobi}
Let $\gamma:[a,b]\rightarrow M$ be a  lightlike  geodesic of $(M,\lambda L)$ and $P$ and $Q$ two submanifolds which are orthogonal to $\gamma$ and non-degenerate at $\gamma(a)$ and $\gamma(b)$  with respect to the metrics $g_{\dot\gamma(a)}$ and  $g_{\dot\gamma(b)}$, respectively.   Let $\tilde \gamma$ be the reparametrization of $\gamma$ given in Prop. \ref{georepara} and  assume that $\tilde{J}$ is a $(P,Q)$-Jacobi field of $\tilde{\gamma}$ with the metric $ L$ and $J$ satisfies that $J(\varphi(\mu))=\tilde{J}(\mu)$ for every $\mu\in[\tilde{a},\tilde{b}]$. Then there exists a function $h:[a,b]\rightarrow \R$  with $h(a)=h(b)=0$  such that $\hat{J}(t)=J(t)+h(t) \dot\gamma(t)$, $t\in [a,b]$ is a $(P,Q)$-Jacobi field of $\gamma$ with respect to the metric $ \lambda  L$.
\end{lemma}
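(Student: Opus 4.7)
My plan is to realize $\hat J$ as a variational vector field of a variation of $\gamma$ by geodesics of $\lambda L$, corrected by a Jacobi field parallel to $\dot\gamma$, so that all the computations can be carried out with the Chern connection of $L$ alone.

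\textbf{Step 1 (a candidate from a variation).} Since $\tilde J$ is a $(P,Q)$-Jacobi field of $\tilde\gamma$ for $L$, the standard variational characterization produces a smooth variation $\tilde\Lambda:[\tilde a,\tilde b]\times(-\varepsilon,\varepsilon)\to M$ whose longitudinal curves $\tilde\gamma_s$ are lightlike geodesics of $L$ with $\tilde\gamma_s(\tilde a)\in P$, $\tilde\gamma_s(\tilde b)\in Q$, and $\partial_s\tilde\Lambda|_{s=0}=\tilde J$. For each $s$ I solve the reparametrization ODE of Prop.~\ref{georepara}, $\dot\varphi_s(\mu)=\lambda(\dot{\tilde\gamma}_s(\mu))$, normalized by $\varphi_s(\tilde a)=a$; then $\gamma_s:=\tilde\gamma_s\circ\varphi_s^{-1}$ is a lightlike geodesic of $\lambda L$ on $[a,b_s]$ with $b_s:=\varphi_s(\tilde b)$ and $b_0=b$. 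Extending each $\gamma_s$ slightly past $b_s$ as a $\lambda L$-geodesic, I obtain a variation $\Lambda(t,s):=\gamma_s(t)$ of $\gamma$ on $[a,b]\times(-\varepsilon',\varepsilon')$ by $\lambda L$-geodesics, so $V:=\partial_s\Lambda|_{s=0}$ is a Jacobi field of $\gamma$ for $\lambda L$. The chain rule applied to $\gamma_s=\tilde\gamma_s\circ\varphi_s^{-1}$, together with $\dot{\tilde\gamma}(\mu)=\dot\varphi(\mu)\,\dot\gamma(\varphi(\mu))$, shows $V(t)=J(t)+h(t)\,\dot\gamma(t)$ for a smooth $h$ with $h(a)=0$ (since $\varphi_s^{-1}(a)=\tilde a$ is independent of $s$).

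\textbf{Step 2 ($P$-normality and the $Q$-side).} A short computation using $L(v)=0$ and the $0$-homogeneity of $\lambda$ gives the identity $g^{\lambda L}_v(v,w)=\lambda(v)\,g^L_v(v,w)$ on the lightcone, so $L$-orthogonality at $\tilde a$ transfers to $\lambda L$-orthogonality at $a$; hence $V$ is a $P$-Jacobi field of $\gamma$ for $\lambda L$. Repeating the construction with $\varphi_s$ normalized instead by $\varphi_s(\tilde b)=b$ and extending $\gamma_s$ backward past $\varphi_s(\tilde a)$ yields a $Q$-Jacobi field $V^Q=J+h^Q\dot\gamma$ with $h^Q(b)=0$. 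Since both $V$ and $V^Q$ are Jacobi fields of $\gamma$ for $\lambda L$, so is $(h-h^Q)\dot\gamma$; and since every $(\alpha+\beta t)\dot\gamma$ is itself such a Jacobi field (it is the variational vector field of the affine reparametrizations $t\mapsto(1+\beta s)t+\alpha s$ of $\gamma$, which preserve the $\lambda L$-geodesic property), the function $h-h^Q$ must be affine.

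\textbf{Step 3 (assembly).} I set $\hat h(t):=h(t)-h(b)(t-a)/(b-a)$, the unique affine correction of $h$ with $\hat h(a)=\hat h(b)=0$, and let $\hat J:=J+\hat h\,\dot\gamma$. Then $\hat J$ is a Jacobi field of $\gamma$ for $\lambda L$ with $\hat J(a)=\tilde J(\tilde a)\in T_{\gamma(a)}P$ and $\hat J(b)=\tilde J(\tilde b)\in T_{\gamma(b)}Q$; the tangential-derivative conditions at $a$ (inherited from $V$) and at $b$ (inherited from $V^Q$) transfer to $\hat J$ because the correction terms are multiples of $\dot\gamma$, and $\dot\gamma$ is $g^{\lambda L}$-orthogonal to both $P$ and $Q$.

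\textbf{Main obstacle.} The central subtlety is that the reparametrization forces the parameter interval $[a,b_s]$ to move with $s$, so no single variation of $\gamma$ by $\lambda L$-geodesics can simultaneously be $P$-normal at $a$ and $Q$-normal at $b$. Overcoming this requires two one-sided variations $V$ and $V^Q$, bridged by the two-parameter family of trivial Jacobi fields $(\alpha+\beta t)\dot\gamma$; the lightlike identity $g^{\lambda L}_v(v,w)=\lambda(v)g^L_v(v,w)$ is what makes the single affine correction $h\rightsquigarrow\hat h$ produce the desired $(P,Q)$-Jacobi field without ever invoking the Chern connection of $\lambda L$.
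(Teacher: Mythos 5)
The decisive problem is in your Step 1. There is no ``standard variational characterization'' that realizes a $(P,Q)$-Jacobi field $\tilde J$ as the variational field of a variation whose longitudinal curves are lightlike geodesics of $L$ with $\tilde\gamma_s(\tilde a)\in P$ \emph{and} $\tilde\gamma_s(\tilde b)\in Q$. Membership in the kernel of the index form is an infinitesimal condition and does not integrate to an actual one-parameter family of geodesics joining $P$ to $Q$: this fails exactly in the situation the lemma is used for, since in Theorem \ref{finalresult} one takes $Q=\tilde\gamma(\mu_0)$ a single point and $\tilde J(\mu_0)=0$, and your variation would then be a genuine family of $P$-orthogonal lightlike geodesics all focusing at $\tilde\gamma(\mu_0)$ --- precisely what the existence of one Jacobi field does not provide (already for conjugate points of a generic Riemannian metric no such family exists). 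Since your Step 2 obtains $V^Q$ by ``repeating the construction'' with the normalization $\varphi_s(\tilde b)=b$, i.e.\ by reusing this same two-sided variation, the field $V^Q$, and with it the affine bridging of Step 3, has nothing to stand on as written.

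The natural repair is to use two \emph{independent} one-sided realizations of $\tilde J$: one by lightlike $L$-geodesics emanating $g$-orthogonally from $P$, another by lightlike $L$-geodesics arriving $g$-orthogonally at $Q$. With that, your bridging argument is sound: $f\dot\gamma$ Jacobi for $\lambda L$ forces $f$ affine (because $R_{\dot\gamma}(\dot\gamma,\dot\gamma)\dot\gamma=0$ and $\dot\gamma$ is parallel for $\lambda L$), and the affine corrections drop out of the tangential boundary conditions since ${\rm tan}_{\dot\gamma}\dot\gamma=0$, the identity $g^{\lambda L}_v(v,w)=\lambda(v)g_v(v,w)$ on the lightcone being correct and exactly what transfers orthogonality. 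But even the one-sided realizations, and the converse fact that the reparametrized variation's field satisfies the second-fundamental-form condition at $a$ with respect to $\lambda L$, are nontrivial unproved statements in this conic pseudo-Finsler, lightlike setting: one must check that the lightlike normal bundle of $P$ is a smooth submanifold near $\dot{\tilde\gamma}(\tilde a)$, that the prescribed initial data are tangent to it (this is where the $P$-Jacobi boundary condition and $g_{\dot{\tilde\gamma}}(D_{\tilde\gamma}^{\dot{\tilde\gamma}}\tilde J,\dot{\tilde\gamma})=0$ enter), and then differentiate the orthogonality relation along the variation, a computation involving the connection of $\lambda L$. Supplying these details is comparable in length to the paper's argument, which avoids variations altogether: it characterizes $(P,Q)$-Jacobi fields of $(M,\lambda L)$ as the kernel of the index form \eqref{formaindice} expressed through the Chern connection of $L$, and then produces $h$ as the solution of an explicit two-point boundary value problem with $h(a)=h(b)=0$.
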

\begin{proof}
 Since  $\tilde{J}$ is a Jacobi field of $\tilde{\gamma}$, it holds \eqref{jaco}.
Now observe that $D_{\tilde{\gamma}}^{\dot{\tilde{\gamma}}}\tilde{J}(\mu)= \lambda(\dot\gamma) D^{\dot\gamma}_\gamma J(\varphi(\mu))$, where $\lambda(\dot\gamma)=\dot\varphi$ (recall Rem. \ref{reparame} and Prop. \ref{georepara}), 
\[( R_{\dot{\tilde{\gamma}}}(\dot {\tilde{\gamma}},\tilde{J})\dot{\tilde{\gamma}})(\mu)
=  \lambda(\dot\gamma)^2  (R_{\dot\gamma}(\dot\gamma,J)\dot\gamma) (\varphi(\mu))\]
 and  recall \eqref{repar},  therefore,  using Rem. \ref{reparame} again,  \eqref{jaco} can be rewritten as
\begin{equation}\label{Jacobieq}
(\lambda(\dot\gamma) J')'=\lambda(\dot\gamma) R_{\dot\gamma}(\dot\gamma,J)\dot\gamma,
\end{equation}
and  recalling  \cite[Rem. 3.6]{JavSoa15}, \eqref{condini} becomes
\begin{equation}\label{valoresinicia}
{\rm tan}_{\dot\gamma(a)} J'(a)={\mathcal S}^P_{\dot\gamma(a)}(J(a)),\quad {\rm tan}_{\dot\gamma(b)} J'(b)={\mathcal S}^Q_{\dot\gamma(b)}(J(b))
\end{equation}
(recall that $\, '$ means to apply $D^{\dot\gamma}_\gamma$). By \cite[Prop. 3.11]{JavSoa15}, we know that $(P,Q)$-Jacobi fields are the vector fields in the kernel of the index form. Reasoning as in \cite[Prop. 3.11]{JavSoa15} with the expression \eqref{formaindice}, we get that $V\in C_L(P,Q)$ is a $(P,Q)$-Jacobi field along $\gamma$ if and only if 
\begin{multline}\label{eulag}
\lambda(\dot\gamma) R_{\dot\gamma}(\dot\gamma,V)\dot\gamma-(\lambda(\dot\gamma) V')'+g_{\dot\gamma}(V',\dot\gamma)(\nabla^{\h}\lambda)_{\dot\gamma}-
(g_{\dot\gamma}(V,(\nabla^{\h}\lambda)_{\dot\gamma})\dot\gamma)'\\
 -\left(g_{\dot\gamma}(V',\dot\gamma)(\nabla^{\nu}\lambda)_{\dot\gamma}\right)'-
(g_{\dot\gamma}(V',(\nabla^{\nu}\lambda)_{\dot\gamma})\dot\gamma)'=0
\end{multline}
and
\begin{equation}\label{condiniciales}
%{\rm tan}_{\dot\gamma(a)} V'(a)={\mathcal S}^P_{\dot\gamma(a)}(V(a)),\quad  {\rm tan}_{\dot\gamma(b)} V'(b)={\mathcal S}^Q_{\dot\gamma(b)}(V(b)).
 \left[\lambda(\dot\gamma)g_{\dot\gamma}(-{\mathcal S}^Q_{\dot\gamma}(V)+V',W)+g_{\dot\gamma}(g_{\dot\gamma}(V',\dot\gamma)(\nabla^\nu\lambda)_{\dot\gamma},W)\right]^b_a=0,
\end{equation}
%\bm (recall \cite[Prop. 3.5]{JavSoa15}). \em
   Though $\gamma$ is not a geodesic with respect to $L$, we can get the above equations in a similar way to
the computations of \cite[Prop. 3.11]{JavSoa15} 
%such a curve also satisfies 
%it is important to observe that the equation \bb for  \eb still holds 
using that $\gamma$ is a pregeodesic, namely, 
$D^{\dot\gamma}_\gamma \dot\gamma=\theta \dot\gamma$ for some function 
$\theta:[a,b]\rightarrow\R$, since in this case, the involved Cartan tensor terms are also zero. 
 Now  observe that as $\tilde{J}$ is a $(P,Q)$-Jacobi field for 
$\tilde{\gamma}$, we have that 
$g_{\dot{\tilde{\gamma}}}(D_{\tilde{\gamma}}^{\dot{\tilde{\gamma}}}\tilde{J},\dot{\tilde{\gamma}})=0$ (this follows easily from \cite[Lemma 3.17]{JavSoa15}) and then
\[g_{\dot\gamma(\varphi(\mu))}(J'(\varphi(\mu)),\dot\gamma(\varphi(\mu)))=\frac{1}{\lambda(\dot{\tilde{\gamma}}(\mu))^2}g_{\dot{\tilde{\gamma}}(\mu)}(D_{\tilde{\gamma}}^{\dot{\tilde{\gamma}}}\tilde{J}(\mu),\dot{\tilde{\gamma}}(\mu))=0.\]
Using the last equation, \eqref{lightgeo}, \eqref{Jacobieq}, \eqref{valoresinicia}
and $g_{\dot\gamma}(\dot\gamma,\dot\gamma)=0$, we deduce that if $V(t)=J(t)+h(t)\dot\gamma(t)$ and $h(a)=h(b)=0$, then $V$ satisfies \eqref{eulag} and \eqref{condiniciales} if and only if
\[-(\lambda(\dot\gamma)(h\dot\gamma)')'-(h\dot\lambda\dot\gamma)'= (J(\lambda)(\gamma) \dot\gamma)',\]
%(\bb \msout{J(\lambda)(\gamma)} \mathcal{D}_{\gamma}\lambda(J)\eb\dot\gamma)',\]
where $\dot\lambda(t)=\frac{d}{dt}\lambda(\dot\gamma(t))$  and 
$ J(\lambda)(\gamma) :=g_{\dot\gamma}(J,(\nabla^{\h}\lambda)_{\dot\gamma})+g_{\dot\gamma}(J',(\nabla^{\nu}\lambda)_{\dot\gamma})$. 
  Here, we have also used that $g_{\dot\gamma}(\dot\gamma, (\nabla^\nu\lambda)_{\dot\gamma})=0$ because the $0$-homogeneity of $\lambda$. 
This equation is equivalent to 
\[\frac{d^2}{dt^2}h=-\frac{1}{\lambda(\gamma)}\frac{d}{dt} (J(\lambda)(\gamma)) \dot\gamma + 
\frac{\dot\lambda}{\lambda(\gamma)^2}J(\lambda)(\gamma) \dot\gamma.\]
It is easy to prove that there exists a unique solution $h:[a,b]\rightarrow \R$ of the above differential equation such that $h(a)=h(b)=0$. Then the vector field $\hat{J}=J+h\dot\gamma$ is a $(P,Q)$-Jacobi field along $\gamma$.
\end{proof}
\begin{thm}\label{finalresult}
Assume that $\gamma:[a,b]\rightarrow M$ is a lightlike geodesic of $(M,\lambda L)$, $\tilde{\gamma}=\gamma\circ\varphi$ is the reparametrization as a lightlike geodesic of $(M,L)$ obtained in Prop. \ref{georepara} and $P$, an orthogonal submanifold passing through $\gamma(a)$ and non-degenerate in that point with the metric $g_{\dot\gamma(a)}$. Then $\mu_0\in (\tilde{a},\tilde{b}]$ is a $P$-focal point of $\tilde{\gamma}$ if and only if $\varphi(\mu_0)$ is a $P$-focal point of $\gamma$ with the same multiplicity.
\end{thm}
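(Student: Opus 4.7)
The plan is to reduce the focal-point multiplicity statement to Lemma \ref{PQJacobi} applied with $Q$ equal to a single point, and then prove injectivity of the resulting linear map between the relevant Jacobi-field spaces; an identical argument with $L$ and $\lambda L$ interchanged will give injectivity in the opposite direction, and finite-dimensional linear algebra will conclude. Recall that the multiplicity of a $P$-focal point at $t_0$ is the dimension of the space of $P$-Jacobi fields vanishing at $t_0$, equivalently the dimension of the space of $(P,\{\gamma(t_0)\})$-Jacobi fields, since a single point is trivially orthogonal to the geodesic and non-degenerate. Setting $Q := \{\tilde\gamma(\mu_0)\} = \{\gamma(\varphi(\mu_0))\}$ and restricting $\tilde\gamma$ and $\gamma$ to $[\tilde a, \mu_0]$ and $[a, \varphi(\mu_0)]$ respectively, Lemma \ref{PQJacobi} yields a linear map
\[
F\colon \mathcal{J}_{\tilde\gamma} \longrightarrow \mathcal{J}_{\gamma}, \qquad \tilde J \longmapsto \hat J = J + h\,\dot\gamma,
\]
where $\mathcal{J}_{\tilde\gamma}$ (resp.\ $\mathcal{J}_\gamma$) denotes the space of $(P,Q)$-Jacobi fields of $\tilde\gamma$ with respect to $L$ (resp.\ of $\gamma$ with respect to $\lambda L$), $J := \tilde J \circ \varphi^{-1}$, and $h$ is the unique solution with $h(a)=h(\varphi(\mu_0))=0$ of the ODE in the proof of Lemma \ref{PQJacobi}.

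The crucial step is to show $F$ is injective. Assume $\hat J \equiv 0$. Then $J = -h\,\dot\gamma$, so $\tilde J = f\,\dot{\tilde\gamma}$ for the scalar $f := -(h \circ \varphi)/\dot\varphi$, and the endpoint conditions on $h$ give $f(\tilde a) = f(\mu_0) = 0$. The vanishing at $\tilde a$ can also be seen from $\tilde J(\tilde a) \in T_{\gamma(a)} P$ combined with $\dot{\tilde\gamma}(\tilde a) = \dot\varphi(\tilde a)\,\dot\gamma(a) \notin T_{\gamma(a)}P$: if $\dot\gamma(a)$ lay in $T_{\gamma(a)}P$, the orthogonality of $P$ would give $g_{\dot\gamma(a)}(\dot\gamma(a), \cdot)=0$ on $T_{\gamma(a)}P$, contradicting non-degeneracy. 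Since $\tilde\gamma$ is a genuine $L$-geodesic, $D^{\dot{\tilde\gamma}}_{\tilde\gamma}\dot{\tilde\gamma}=0$, and $R_{\dot{\tilde\gamma}}(\dot{\tilde\gamma},\dot{\tilde\gamma})\dot{\tilde\gamma} = 0$; thus the Jacobi equation \eqref{jaco} for $\tilde J = f\,\dot{\tilde\gamma}$ collapses to $f''\,\dot{\tilde\gamma} = 0$, so $f$ is linear. Having two zeros, $f \equiv 0$, hence $\tilde J = 0$.

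For the reverse injection, I apply Lemma \ref{PQJacobi} a second time with the roles of $L$ and $\lambda L$ interchanged, using the conformal factor $\lambda' := 1/\lambda$, which is again smooth, positive and $0$-homogeneous. In this new configuration the ``$L$-geodesic'' is $\gamma$, the ``$\lambda L$-geodesic'' is $\tilde\gamma$, and the reparametrization is $\varphi^{-1}$, which indeed satisfies $(\varphi^{-1})'(\mu) = 1/\lambda(\dot\gamma(\mu)) = \lambda'(\dot{\tilde\gamma}(\varphi^{-1}(\mu)))$ by the $0$-homogeneity of $\lambda$. The hypothesis that $P$ is orthogonal and non-degenerate at $\gamma(a)$ with respect to $g_{\dot\gamma(a)}$ transfers to $g^{\lambda L}_{\dot\gamma(a)}$, since at the lightlike vector $\dot\gamma(a)$ the two fundamental tensors differ on $T_{\gamma(a)}P \times T_{\gamma(a)}P$ only by the positive factor $\lambda(\dot\gamma(a))$ (a short computation using $L(\dot\gamma(a))=0$ and the orthogonality of $P$). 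The same kernel argument then supplies an injective linear map $G\colon \mathcal{J}_\gamma \to \mathcal{J}_{\tilde\gamma}$, and since both spaces are finite-dimensional we conclude $\dim \mathcal{J}_{\tilde\gamma} = \dim \mathcal{J}_\gamma$, which is the equality of multiplicities asserted by the theorem.

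The main obstacle lies in the kernel analysis of $F$: recognizing that $\ker F$ must consist of multiples of $\dot{\tilde\gamma}$ and then extracting both zeros of the resulting linear function $f$ is where the non-degeneracy of $P$ is used essentially, and where one must be careful since $\gamma$ itself is only a pregeodesic of $L$, while the whole Jacobi calculus is being performed along the true $L$-geodesic $\tilde\gamma$.
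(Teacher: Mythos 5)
Your proposal is correct and follows essentially the same route as the paper: apply Lemma \ref{PQJacobi} with $Q=\{\tilde\gamma(\mu_0)\}$, show that the induced linear map between the spaces of $(P,Q)$-Jacobi fields is injective, and then interchange the roles of $L$ and $\lambda L$ via the conformal factor $1/\lambda$ to get the reverse inequality. The only differences are cosmetic: you prove injectivity directly (a Jacobi field proportional to $\dot{\tilde\gamma}$ has affine coefficient, which vanishes at both endpoints) where the paper invokes part (i) of Lemma 3.17 of \cite{JavSoa15}, and you make explicit the checks for the reverse application ($1/\lambda$ being an admissible factor, the reparametrization being $\varphi^{-1}$, and the transfer of orthogonality and non-degeneracy of $P$ at the lightlike direction) that the paper leaves implicit.
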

\begin{proof}
It is a consequence of Lemma \ref{PQJacobi}. Observe that if we choose $Q=\tilde{\gamma}( \mu_0 )$, the above lemma gives a map between $P$-Jacobi fields of $\tilde{\gamma}$ such that $\tilde{J}(\mu_0)=0 $ and $P$-Jacobi fields $\hat J$  of $\gamma$ such that $\hat J (\varphi(\mu_0))=0$. Moreover, this map is injective, because if $\hat{J}=0$, then $\tilde{J}(\mu)=\phi(\mu)\dot{\tilde{\gamma}}(\mu)$ for some smooth function $\phi:[\tilde{a},\tilde{b}]\rightarrow\R$, but from \cite[part $(i)$ of  Lemma 3.17]{JavSoa15}, it follows that $\tilde{J}=0$. The injectivity of the map implies that ${\rm mul}_{\tilde{\gamma}}(\mu_0)\leq {\rm mul}_\gamma(\varphi(\mu_0))$, namely, the multiplicity of $\mu_0$ as a $P$-focal point of $\tilde{\gamma}$ is less or equal to the multiplicity of $\varphi(\mu_0)$ as a $P$-focal point of $\gamma$. Using Lemma \ref{PQJacobi} with the conformal change $1/\lambda$ and the metric $\lambda L$ we get the other inequality concluding that ${\rm mul}_{\tilde{\gamma}}(\mu_0)= {\rm mul}_{\gamma}(\varphi(\mu_0))$ as required.
\end{proof}

\end{document}